\newtheorem{theorem}{Theorem}[section]
\newtheorem{lemma}[theorem]{Lemma}
\newtheorem{corollary}[theorem]{Corollary}
\newtheorem{proposition}[theorem]{Proposition}
\newtheorem{notation}[theorem]{Notation}
\theoremstyle{definition}
\newtheorem{definition}[theorem]{Definition}
\newtheorem{remark}[theorem]{Remark}
\newtheorem{setup}[theorem]{Setup}
\newtheorem{example}[theorem]{Example}
\newtheorem{construction}{Construction}
\def\C{{\mathbb C}}
\def\P{{\mathbb P}}
\def\R{{\mathbb R}}
\def\Z{{\mathbb Z}}
\def\cD{{\mathcal D}}
\def\cE{{\mathcal E}}
\def\cM{{\mathcal M}}
\def\cO{{\mathcal{O}}}
\def\cP{{\mathcal{P}}}
\def\cU{{\mathcal U}}
\def\fg{{\mathfrak g}}
\def\fh{{\mathfrak h}}
\def\fp{{\mathfrak p}}
\def\fb{{\mathfrak b}}
\def\GL{\operatorname{Gl}}
\def\PGL{\operatorname{PGl}}
\def\SL{\operatorname{Sl}}
\def\SO{\operatorname{SO}}
\def\SP{\operatorname{Sp}}
\def\rat{\dashrightarrow}
\def\operatorname#1{\mathop{\rm #1}\nolimits}
\def\Aut{\operatorname{Aut}}
\def\Hom{\operatorname{Hom}}
\def\Pic{\operatorname{Pic}}
\def\rk{\operatorname{rk}}
\def\rat{\operatorname{RatCurves}}
\def\NE{{\operatorname{NE}}}
\def\Nef{{\operatorname{Nef}}}
\newcommand{\cNE}[1]{\overline{\NE}(#1)}
\newcommand{\sgn}{\operatorname{sgn}}
\newcommand{\Chi}{\ensuremath \raisebox{2pt}{$\chi$}}
\newcommand{\pb}{\ar@{}[dr]|(.25){\text{\pigpenfont J}}}
 \newcommand*\tl[1]{\mathpalette\wthelper{#1}}
\newcommand*\wthelper[2]{%
        \hbox{\dimen@\accentfontxheight#1%
                \accentfontxheight#11.15\dimen@
                $\m@th#1\widetilde{#2}$%
                \accentfontxheight#1\dimen@
        }%
}
\newcommand*\accentfontxheight[1]{%
        \fontdimen5\ifx#1\displaystyle
                \textfont
        \else\ifx#1\textstyle
                \textfont
        \else\ifx#1\scriptstyle
                \scriptfont
        \else
                \scriptscriptfont
        \fi\fi\fi3
}
\DeclareMathOperator{\ch}{\mathrm{Chains}}
\begin{document}

\title[Flag bundles on Fano manifolds]{Flag bundles on Fano manifolds }

\author[Occhetta]{Gianluca Occhetta}
\address{Dipartimento di Matematica, Universit\`a di Trento, via
Sommarive 14 I-38123 Povo di Trento (TN), Italy} \thanks{First author supported by PRIN project ``Geometria delle variet\`a algebriche''.
First and second author supported by the Department of Mathematics of the University of Trento. Second and third author supported by the Polish National Science Center project 2013/08/A/ST1/00804.}
\email{gianluca.occhetta@unitn.it, eduardo.solaconde@unitn.it}

\author[Sol\'a Conde]{Luis E. Sol\'a Conde}
\subjclass[2010]{Primary 14J45; Secondary 14E30, 14M15, 14M17}
%
%
\author[Wi\'sniewski]{Jaros\l{}aw A. Wi\'sniewski}
\address{Instytut Matematyki UW, Banacha 2, 02-097 Warszawa, Poland}
\email{J.Wisniewski@uw.edu.pl}

\begin{abstract}
As an application of a recent characterization 
of complete flag manifolds as Fano manifolds having only $\P^1$-bundles as elementary contractions, we consider here the case of a Fano manifold $X$ of Picard number one supporting an unsplit family of rational curves whose subfamilies parametrizing curves through a fixed point are rational homogeneous, and we prove that $X$ is homogeneous. In order to do this, we first study minimal sections on flag bundles over the projective line, and discuss how Grothendieck's theorem on principal bundles allows us to describe a flag bundle upon some special sections.
\end{abstract}

\maketitle

\section{Introduction}\label{sec:intro}

Although vector bundles and their projectivizations play an important role in every branch of Algebraic Geometry, a framework in which these objects are especially manageable is the one of algebraic varieties containing rational curves. The main reason for this is the fact, attributed to C. Segre for rank two (see \cite[p. 44]{OSS} for historical remarks on the general statement), that every vector bundle over the projective line $\P^1$ is isomorphic to a direct sum of line bundles. However, it was not until Alexandre Grothendieck's celebrated paper \cite{Gro1} that this theorem achieved an optimal form, in the framework of principal bundles. In fact, Grothendieck shows (by reducing the general case, via the adjoint representation of $G$, to the study of orthogonal bundles over $\P^1$) that every principal $G$-bundle --for $G$ reductive-- is diagonalizable (see Theorem \ref{thm:groth} below).

Our interest in this topic comes from its relation with certain homogeneity criteria that we have been recently considering  (\cite{MOSW,OSWW}) within a project whose goal is the Campana--Peternell conjecture, which predicts that every Fano manifold with nef tangent bundle is rational homogeneous. In a nutshell, we showed that flag manifolds are characterized within the class of Fano manifolds by having only $\P^1$-bundles as elementary contractions. In particular, one may then try to use this result to prove the homogeneity of a certain Fano manifold $X$ by ``untangling'' its families of extremal rational curves, constructing upon $X$ another Fano manifold $\tl{X}$ dominating it and satisfying the above property. 

More concretely, this ``bottom-up'' strategy may be roughly described as follows: we start from a Fano manifold whose homogeneity we want to check; we consider a (not necessarily complete) proper dominating family of minimal rational curves in $X$, $\cM\stackrel{p}{\longleftarrow}\cU\stackrel{q}{\longrightarrow} X$, and ask ourselves whether $\tl{X}=\cU$ is again a Fano manifold and, in this case, proceed by substituting $X$ by $\tl{X}$. If this procedure can be carried out until we get to a Fano manifold  in which all the families of minimal rational curves are $\P^1$-bundles, then the original variety $X$ will be homogeneous.

This process can be shortened in the particular case in which $q$ is smooth and its fibers $\cM_x:=q^{-1}(x)$  are homogeneous, since rational homogeneous bundles are determined by principal bundles, leading us immediately to a complete flag bundle $\tl{\cU}$ dominating $\cU$ (see Section \ref{sec:notn} below). In order to check that $\tl{\cU}$ is in fact a complete flag, we need to study sections of the bundle $\tl{\cU}$ over minimal rational  curves in $X$, which is our motivation to give a ``rational curves oriented'' interpretation of Grothendieck's theorem. 

Section \ref{sec:groth}  is devoted to this topic. More concretely, we will study $G/B$-bundles (with $G$ semisimple and $B\subset G$ a Borel subgroup) over the Riemann sphere $\P^1$. We will see that (up to a choice of a Cartan subgroup $H\subset B$) in each of these bundles we may define a set of sections, that we call {\it fundamental}, that are in one-to-one correspondence with the Weyl group of $G$ and that, under this correspondence, reflections of the root system correspond to $\P^1$-bundles containing pairs of these sections. Moreover, we will show that one of these sections is minimal --in a deformation theoretical sense, see Definition \ref{defn:sections}-- and that the $G/B$-bundle is determined by this minimal section and by its self-intersection numbers within the $\rk(G)$ $\P^1$-bundles containing it. This information may be then represented by what we call a {\it tagged Dynkin diagram} (see Theorem \ref{thm:intergrothen} for a precise statement). 

As an application, we prove, in Section \ref{sec:appCP}, the following statement:

\begin{theorem}\label{thm:RHVMRT} Let $X$ be a Fano manifold of Picard number one, and $p:\cU\to \cM$ be an unsplit dominating complete family of rational curves satisfying that 
the evaluation morphism $q:\cU\to X$ is smooth. Assume furthermore that the fiber $\cM_x=q^{-1}(x)$ is a rational homogeneous space for every $x\in X$. Then $X$ is rational homogeneous.
\end{theorem}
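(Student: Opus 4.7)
My plan is to construct from the data a complete flag bundle $\tl{\cU}\to X$ dominating $\cU$, verify that $\tl{\cU}$ is a Fano manifold whose elementary contractions are all $\P^1$-bundles, and then invoke the characterization of complete flag manifolds from \cite{OSWW} to conclude that $\tl{\cU}$, and therefore $X$, is rational homogeneous.

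First, since $q:\cU\to X$ is a smooth proper morphism with rational homogeneous fibers $\cM_x\cong G/P$, where $G$ is a semisimple group and $P\subset G$ a parabolic (well-defined up to conjugation because $X$ is connected and, being Fano, simply connected by Campana--Koll\'ar--Miyaoka--Mori), standard rigidity results for rational homogeneous bundles realize $\cU$ as associated to a principal $G$-bundle $\cP\to X$. Fixing a Borel $B\subset P$, I set $\tl{\cU}:=\cP\times^G G/B$. This is a $G/B$-bundle dominating $\cU$ via the map induced by $G/B\to G/P$, and each simple root of $G$ induces a fiberwise $\P^1$-bundle structure $G/B\to G/P_\alpha$ that extends to an elementary contraction of $\tl{\cU}$. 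Since the relative Picard group of $\tl{\cU}\to X$ has rank $\rk(G)$ and $\rho(X)=1$, one gets $\rho(\tl{\cU})=1+\rk(G)$, so after producing one more $\P^1$-bundle contraction and establishing Fano-ness we will be done.

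Second, I would produce the extra $\P^1$-bundle contraction from the family $\cM$. For each curve $\ell\subset X$ parametrized by a point of $\cM$, the restriction $\tl{\cU}|_\ell\to\ell\cong\P^1$ is a $G/B$-bundle over $\P^1$, to which Grothendieck's Theorem \ref{thm:groth} and its reformulation Theorem \ref{thm:intergrothen} apply: they single out a distinguished minimal section $\sigma_\ell$ whose self-intersection numbers within the $\rk(G)$ vertical $\P^1$-bundles are encoded in a tagged Dynkin diagram, and which together with those numbers determines the bundle up to isomorphism. This tagged diagram is locally constant on the connected scheme $\cM$, hence constant, so the sections $\sigma_\ell$ form a well-defined unsplit family of numerically equivalent rational curves on $\tl{\cU}$ dominating $\tl{\cU}$ (because $q$ is surjective and the minimal section is defined over every curve of $\cM$). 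Its Mori contraction will be the sought-for $\P^1$-bundle contraction, living in a numerical class independent from the $\rk(G)$ vertical ones.

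Third, I would verify that $\tl{\cU}$ is Fano. The relative anticanonical of $\tl{\cU}\to X$ is relatively ample (since it restricts to $-K_{G/B}$ on each fiber), so $-K_{\tl{\cU}}$ has positive intersection with the $\rk(G)$ vertical extremal families. The intersection $-K_{\tl{\cU}}\cdot\sigma_\ell$ with the minimal section can be read off from the tagged Dynkin diagram together with the positivity of $-K_X\cdot\ell$, and is a positive integer. Because $\cNE{\tl{\cU}}$ is generated by these $1+\rk(G)$ extremal families, which span all of $N_1(\tl{\cU})$ by the Picard number count, $-K_{\tl{\cU}}$ is ample. Then $\tl{\cU}$ fulfils the hypotheses of the main theorem of \cite{OSWW} and is therefore a complete flag manifold, in particular rational homogeneous; and the composition $\tl{\cU}\to\cU\to X$ exhibits $X$ as a quotient of a rational homogeneous space by a parabolic subgroup of its automorphism group, so $X$ itself is rational homogeneous. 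The crux of the argument is the second step: showing that the pointwise minimal sections assemble into a genuine unsplit dominating family whose Mori contraction is a $\P^1$-bundle. The rigidity provided by the tagged Dynkin diagram, the construction of which occupies Section \ref{sec:groth}, is exactly what is needed to rule out jumping and to glue the sections coherently.
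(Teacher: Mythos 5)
Your overall architecture is the right one and matches the paper's: build the complete flag bundle $\tl{\cU}\to X$ from the principal $G$-bundle hidden in $q:\cU\to X$, produce one more $\P^1$-bundle contraction in a direction transverse to the $\rk(G)$ vertical ones, and then invoke the OSWW characterization of complete flag manifolds. But the heart of the proof --- your ``second step'' --- is where the real work lies, and your treatment of it is a genuine gap rather than a sketch.

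You assert that the pointwise minimal sections $\sigma_\ell$ of $\tl{\cU}|_\ell$ glue into an unsplit family whose Mori contraction is a $\P^1$-bundle, citing the constancy of the tagged Dynkin diagram as the mechanism. This does not suffice. The constancy of the tag controls the isomorphism class of $\tl{\cU}|_\ell$ as $\ell$ varies, but it does not produce a fibration of $\tl{\cU}$ with $\P^1$ fibers, nor does it explain why the $(1+\rk G)$-th class is extremal with the right contraction. What is actually needed --- and what the paper proves as Lemma \ref{lem:RHVMRT1} --- is that the restriction of $\pi:\tl{\cU}\to\cU$ to a fiber of $p:\cU\to\cM$ (a copy of $\P^1$) is a \emph{trivial} $G'/B'$-bundle, so that each fiber of $p\circ\pi$ is isomorphic to $G'/B'\times\P^1$; only then can one factor $p\circ\pi$ through a $\P^1$-bundle $\tl p:\tl{\cU}\to\tl{\cM}$ (Corollary \ref{cor:RHVMRT1}). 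Proving this triviality is a nontrivial numerical argument: one computes, from Lemma \ref{lem:relcan2}, that $\pi^*K_q\cdot\tl\Gamma=\sum_{j\notin I}(b_j-c_j)d_j+\sum_{i\in I}b_i d_i=\dim(\cM_x)$, where the $d_t\ge 0$ are the tags of the minimal section $\tl\Gamma$, and then one shows $b_j-c_j>0$ for $j\notin I$ (using that $G$ acts effectively) and $\sum_{i\in I}b_i d_i\ge\dim(\cM_x)$ (using that no $d_i$ vanishes for $i\in I$, because a $(0,0)$-tag there would give a positive-dimensional family of curves in $\cM$ with the same image in $X$). Only then does one conclude $d_j=0$ for all $j\notin I$, i.e. the $G'/B'$-bundle over $\P^1$ is trivial. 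None of this is present in your proposal.

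Your third step also contains an unjustified leap: you claim that $\cNE{\tl{\cU}}$ is the simplicial cone on the $1+\rk(G)$ classes (``because'' they span $N_1$), whence Fano-ness. Spanning $N_1$ is necessary but far from sufficient for a simplicial Mori cone; establishing this for $\tl{\cU}$ is a real theorem (Proposition \ref{prop:simpl} in the paper's appendix). The paper avoids having to prove Fano-ness of $\tl{\cU}$ upfront by appealing to the strengthened characterization Theorem \ref{thm:newOSWW}, whose hypotheses only ask for $\rho(\tl{\cU})$ independent $K$-negative extremal rays contracted by smooth $\P^1$-bundles; Fano-ness then comes out in the wash. If you insist on using the original form of the OSWW theorem, you still owe a proof of simpliciality of $\cNE{\tl{\cU}}$, which is not a consequence of the Picard-number count.
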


The relation of this statement with the Campana--Peternell conjecture comes from the fact that the smoothness of 
$q$ is satisfied by any unsplit family of rational curves in $X$ if we assume that $T_X$ is nef. 

We note also that this results resembles the main theorem in \cite{HH} (proven first by Mok for Hermitian symmetric spaces and homogeneous contact manifolds, see \cite{Mk3}), but there are certain differences between the two statements: on one hand Hong and Hwang need to assume that the image of $\cM_x$ into $\P(\Omega_{X,x})$, the so called {\it VMRT of $\cM$ at $x$}, is projectively equivalent to the VMRT of a rational homogeneous manifold $X'$, while we do not need to consider any particular projective embedding of $\cM_x$. On the other hand, they only need to check the above property on a general point $x\in X$, while we need to assume that $q$ is smooth, and that every $\cM_x$ is rational homogeneous. Note that if the relative Picard number $\rho(\cU/\cM)$ is one, it is enough to check the homogeneity condition for a general $x\in X$ (see \cite[Section 3]{Hw4}). 

Unfortunately, one cannot expect to characterize all the rational homogeneous manifolds of Picard number one in this way, since there are some examples  
for which $\cM_x$ is not homogeneous (\cite{LM}, see also \cite[Proposition 2.20]{MOSWW}). We expect that a similar treatment of non complete families of rational curves may lead to a solution of this problem.

Finally we note that Theorem \ref{thm:RHVMRT} depends on a characterization of complete flag manifolds which is a slightly stronger version of the main theorem in \cite{OSWW}, see Theorem \ref{thm:newOSWW}. We show how to prove this statement in Appendix \ref{sec:OSWWrev}.

\medskip

\noindent{\bf Warning:} In previous papers (\cite{MOSW,MOSWW,OSWW}) we used the term ``smooth $\P^1$-fibration'' for any smooth morphism whose fibers are isomorphic to $\P^1$, reserving the name ``$\P^1$-bundle'' for the Grothendieck projectivization of a rank two vector bundle. In the present paper, which is about smooth rational homogeneous fiber bundles, 
the term ``$\P^1$-bundle'' refers to a smooth morphism whose fibers are isomorphic to $\P^1$.


\section{Rational homogeneous bundles}\label{sec:notn}

Throughout this paper all the varieties will be smooth, projective and defined over the field of complex numbers. A {\it (smooth, projective) fiber bundle over $X$} is a (smooth, projective) morphism $q:E\to X$ between smooth varieties. We will be interested in the case in which $q$ is smooth, projective, and also {\it isotrivial}, which means that all the fibers of $q$ over closed points are isomorphic; if $F$ is a variety isomorphic to the fibers of 
$q$, we will say that the fiber bundle is an {\it $F$-bundle}. According to a classical theorem of Fischer and Grauert (cf. \cite[page~29]{BPVV}), it follows that a smooth projective $F$-bundle $q$ is {\it locally trivial} in the analytic topology: there exists then an open covering $\{U_i,\,i\in I\}$ of $X$ with the analytic topology and isomorphisms $\phi_i:U_i\times F\to q^{-1}(U_i)$ commuting with the corresponding projections onto $U_i$, named {\it trivializations of $q$}. We may consider $\{\phi_i\}$ to be the whole atlas of trivializations of $q$, and then $q$ may be reconstructed from the complex varieties $U_i\times F$ by means of the transitions $$\phi_{ij}=\phi_j^{-1}\circ\phi_i: U_{ij}\times F\longrightarrow U_{ij}\times F,\quad\mbox{(}U_{ij}:=U_i\cap U_j\mbox{)},$$ or, equivalently by the corresponding maps
$\theta_{ij}:U_{ij}\to \Aut(F)$ satisfying $\phi_{ij}(x,y)=(x,\theta_{ij}(x)(y))$. It turns out that the $\phi_{ij}$ define a cocycle $\theta\in H^1(X,\Aut(F))$, which completely determines the $F$-bundle $q:E\to X$. Note that this cocycle is defined over the analytic space associated to $X$, that we denote also by $X$, by abuse of notation.

We will assume also that every fiber $F$ of $q$ is {\it rational homogeneous}, i.e., that it is isomorphic to the quotient of a semisimple group by a parabolic subgroup. 

\begin{remark}\label{rem:unique}
Note that the representation of $F$ as a quotient of a semisimple group is not unique. Besides eventual choice of two semisimple groups with the same universal cover, in some occasions a rational homogeneous variety $F$ may be written as the quotient of two semisimple groups with different associated Lie algebras. 
This is the case  of $\P^{2n-1}$ (that can be written as a quotient of $\SL(2n)$, $\PGL(2n-1)$ --these two with the same Lie algebra-- and $\SP(2n)$), of the quadric $Q^5$ (quotient of $\SO_7$ and $\mathrm{G}_2$), and the spinor variety parametrizing $r-1$ dimensional linear subspaces in a quadric of dimension $2r-1$ (quotient of $\SO_{2r+1}$ and $\SO_{2r+2}$). For irreducible rational homogeneous manifolds (i.e. rational homogeneous manifolds that are not products), this is the complete list (see \cite[Ch.~3, p.~75]{Akh}). 
\end{remark}

However, it is known that the identity component of the automorphism group  $\Aut(F)$ is semisimple \cite[Thm.~3.11]{Huck}, and it acts transitively on $F$. We will denote it by $G_F$, or $G$ if there is no possible confusion, and then we may write $F$ as a quotient $G/P$ for some parabolic subgroup $P\subset G$. If we further assume that the base $X$ is simply connected (this is the case, for instance, if $X$ is Fano or, more generally, rationally connected), then it follows that the cocycle $\theta\in H^1(X,\Aut(F))$ lies in $H^1(X,G)$; otherwise, since $G$ is a normal subgroup of $\Aut(F)$, $\theta$ would define a nonzero cocycle in $H^1(X,\Aut(F)/G)$, which in turn defines an \'etale covering of $X$, a contradiction. 

Assume now that $q:E\to X$ is an $F$-bundle defined by a cocycle $\theta\in H^1(X,G)$, for a rational homogeneous manifold $F$ whose automorphism group has identity component $G$. Then $\theta$ defines a $G$-bundle $q_G:E_G\to X$, given by the glueing of $U_i\times G$ by means of the transition functions:
$$
\theta_{ij}:U_{ij}\to G\hookrightarrow \Aut(G),
$$
where $G$ embeds in $\Aut(G)$ as the set of automorphism given by left multiplication with elements of $G$. There is then a well defined action of $G$ on $E_G$, given by right multiplication by elements of $G$ on every chart. We may now say that the $G$-bundle $E_G$ is a {\it principal} $G$-bundle over $X$. If we now take any parabolic subgroup ${P'}\subset G$, we may consider the $G/{P'}$-bundle: 
$$
q_{P'}:E_{P'}:=E_G\times_GG/{P'}:=(E_G\times G/{P'})/\!\!\sim_G\,\,\to X,
$$
where $\sim_G$ is the equivalence relation defined by $(x,g{P'})\sim_G (xh,h^{-1}g{P'})$ for every $h\in G$. One may easily check this $G/{P'}$-bundle is defined also by the same cocycle $\theta\in H^1(X,G)$. Note that $H^1(X,G)$ is the cohomology of $X$, regarded as an analytic space, with values in $G$, 
so $E_{P'}$ is, a priori, only defined as a compact analytic space. However, since we can embed $E_{P'}$ into the projectivization of the holomorphic vector bundle ${q_{P'}}_*(\omega^{-r}_{E_{P'}|X})\cong E_G\times_{P'}H^0(G/P',\omega^{-r}_{G/{P'}})$ (for $r\gg 0$), which is an algebraic vector bundle by the GAGA principle (\cite{GAGA}), it follows that $E_{P'}$ is indeed a projective variety. 

Furthermore, given two parabolic subgroups $P_1\subset P_2\subset G$, one has a natural map $q_{P_1,P_2}:E_{P_1}\to E_{P_2}$ satisfying $q_{P_2}\circ q_{P_1,P_2}=q_{P_1}$. In particular we will consider a Borel subgroup $B$ of $G$ contained in $P$, and define $\tl{E}:=E_B$, with projection map $\tl{q}=q_B:\tl{E}\to X$, and, for every parabolic ${P'}\supset B$, a map $\tl{q}_{P'}:=q_{B,{P'}}:\tl{E}\to E_{P'}$. In particular we have a projection $\tl{q}_P:\tl{E}\to E_P=E$, that we will denote by $\pi:\tl{E}\to E$. The bundle $\tl{q}:\tl{E}\to X$ will be called the {\it flag bundle} associated to $q:E\to X$. Note that $\pi$ is also a flag bundle over $E$, whose fiber we write as $G'/B'$, where $G'$ is the the semisimple group obtained by subsequently quotienting $P$ by its unipotent subgroup and then by the center, and $B'$ is the image of $B$ in this quotient.

We will denote by $k$ the {\it rank of the group }$G$, which can be defined as the dimension of a maximal torus contained in $B$, or as the number of simple roots of the root system of the Lie algebra $\fg$ of $G$, or as the number of minimal parabolic subgroups of $G$ containing $B$. Note that, denoting by $P_1,\dots, P_k$ these minimal parabolic subgroups, the morphisms $\pi_i:=\tl{q}_{P_i}:\tl{E}\to E_i:=E_{P_i}$ are $\P^1$-bundles.

For the reader's convenience, we present the following diagram that illustrates the notation used (corresponding to a minimal parabolic subgroup $P_i\subset P$):
$$
\xymatrix@=35pt{&&E=E_P\ar[dd]^{q}\\
\tl{E}\ar@/^{3mm}/[rru]^{\pi=\pi_P}\ar[r]^{\pi_i}\ar@/_{3mm}/[rrd]_{\tl{q}}&E_{P_i}\ar[ru]_{\tl{q}_{P_i,P}}\ar[rd]^{q_{P_i}}&\\
&&X
}
$$

Finally, we may describe the relative canonical class of a $G/B$-bundle as above. It is well known that $K_{G/B}$ may be described as follows. Consider $\Delta:=\{\alpha_i,\,\,i=1,\dots, k\}$ a base of simple roots of the root system $\Phi$ of the Lie algebra $\fg$ of $G$, so that every positive root $\beta\in\Phi^+$ is a nonnegative integer combination of the $\alpha_i$'s, and define the integers $b_1,\dots,b_k$ by the formula:
\begin{equation}\label{eq:posroots}
\sum_{\beta\in\Phi^+}\beta=\sum_{t=1}^kb_t\alpha_t.
\end{equation}
Consider also the relative canonical divisors $K_{t}$ of the projections $G/B\to G/P_t$. Then:
$$
K_{G/B}=\sum_{t=1}^kb_tK_t.
$$
Table \ref{magicnumbers} below contains the values of the integers $b_1,\dots,b_k$, for $G$ simple (the numbering of the nodes corresponds to the one  in \cite[p. 58]{Hum2}).
\renewcommand{\arraystretch}{1.15} 


\begin{table}[!ht]
\centering
\begin{tabular}{|c|c|}
\hline
 $\cD$& $b_1,\dots,b_k$  \\ \hline \hline
 ${\rm A}_k$& $k,(k-1)2, \dots, 2(k-1), k$\\ \hline
 ${\rm B}_k$& $2k-1,2(2k-2),\dots,(k-1)(k+1),k^2$\\ \hline
 ${\rm C}_k$& $2k,2(2k-1),\dots,(k-1)(k+2),k(k+1)/2$\\ \hline
  ${\rm D}_k$& $2k-2,2(2k-3),\dots,(k-2)(k+1),(k-1)k/2,(k-1)k/2$\\ \hline
 ${\rm E}_6$& $16,22,30,42,30,16$\\ \hline
 ${\rm E}_7$& $34,49,66,96,75,52,27$\\ \hline
 ${\rm E}_8$& $92, 136,182,270,220,168,114,58$\\ \hline
 ${\rm F}_4$& $16,30,42,22$\\ \hline
 ${\rm G}_2$& $10,6$\\ \hline
\end{tabular}
\caption[VMRTs of RH]{Coefficients of $K_{G/B}$ in terms of the relative canonical divisors $K_t$.} \label{magicnumbers}
\end{table}

On the other hand, one may show that $T_{G/B}$ may be constructed upon the relative tangent bundles of the projections $G/B\to G/P_t$, via successive use of Lie brackets. We refer the reader to \cite[Construction 1]{MOSW} for details on this construction. In our situation, we may show that, identifying $\pi_t |_{{\tl{q}}\,^{-1}(x)}:\tl{q}\,^{-1}(x) \to {\tl{q}}\,_t^{-1}(x)$ with the projection $G/B\to G/P_t$, for any $x \in X$ then
\begin{equation}\label{eq:rel}
\cO_{\tl E}(K_{\pi_t})|_{\tl{q}^{-1}(x)}= \cO_{G/B}(K_t),
\end{equation}
and the corresponding successive use of Lie brackets produces the relative tangent bundle of ${\tl{E}}$. In particular, one obtains:

\begin{lemma}\label{lem:relcan}
The relative canonical class of $\tl{q}$ satisfies:
 $$
 K_{\tl{q}}=\sum_{t=1}^kb_tK_{\pi_{t}}.
 $$ 
 
\end{lemma}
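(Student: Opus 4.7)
My plan is to compute $c_1(T_{\tl{E}/X})$ via a filtration and then match it with $\sum_t b_t\, c_1(T_{\pi_t})$ using the identity (\ref{eq:posroots}). The successive-Lie-bracket construction recalled just before the lemma equips $T_{\tl{E}/X}$ with a filtration whose graded pieces are the line bundles $L_{-\beta}:=E_G\times^B\C_{-\beta}$ on $\tl{E}$, for $\beta\in\Phi^+$, associated, via the underlying principal $G$-bundle $E_G\to X$, to the root-space decomposition of $\fg/\fb$. In particular, the relative tangent line bundle of the $\P^1$-bundle $\pi_t$ is $T_{\pi_t}=L_{-\alpha_t}$, since $\fp_t/\fb\cong \fg_{-\alpha_t}$ as a $B$-representation.

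Additivity of $c_1$ along the filtration, together with the fact that the assignment $\lambda\mapsto L_\lambda$ is a homomorphism from the character lattice of $B$ to $\Pic(\tl{E})$, yields
$$c_1(T_{\tl{E}/X})=\sum_{\beta\in \Phi^+}c_1(L_{-\beta})=c_1\bigl(L_{-\sum_{\beta\in \Phi^+}\beta}\bigr)=\sum_{t=1}^{k} b_t\, c_1(L_{-\alpha_t})=\sum_{t=1}^{k} b_t\, c_1(T_{\pi_t}),$$
where the third equality uses the defining identity (\ref{eq:posroots}). Negating gives $K_{\tl{q}}=\sum_t b_t K_{\pi_t}$, which is the content of the lemma.

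The substantive step here is the identification of the graded pieces of the filtration on $T_{\tl{E}/X}$ with the line bundles $L_{-\beta}$ on $\tl{E}$; this is the relative analogue of the standard root-space filtration of $T_{G/B}$ and is precisely what the Lie-bracket construction of \cite[Construction 1]{MOSW} globalizes to the total space of the flag bundle. Equation (\ref{eq:rel}) can be used as a consistency check: restricting the displayed computation to any fiber $\tl{q}^{-1}(x)\cong G/B$ collapses it to the classical formula $K_{G/B}=\sum_t b_t K_t$ on $G/B$.
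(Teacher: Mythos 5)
Your proof is correct and follows essentially the same approach the paper takes. The paper cites \cite[Construction 1]{MOSW} and observes that the same successive-Lie-bracket construction builds $T_{\tl q}$ out of the $T_{\pi_t}$'s; you have simply unwound that construction into its concrete form, namely the $B$-stable root-space filtration of $\fg/\fb$ with line-bundle graded pieces $L_{\pm\beta}$ on $\tl E$, and then used additivity of $c_1$ and the homomorphism $\lambda\mapsto L_\lambda$ to transport the identity (\ref{eq:posroots}) into $\Pic(\tl E)$. That is exactly what ``the corresponding successive use of Lie brackets produces the relative tangent bundle of $\tl E$'' is pointing at, so this is the same argument made explicit, not a different route. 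One small remark on conventions: the paper takes $\fb=\fh\oplus\bigoplus_{\beta\in\Phi^-}\fg_\beta$ and $\fp_t=\fb\oplus\fg_{\alpha_t}$ (Remark \ref{rem:Borels1}), so under the paper's conventions $\fp_t/\fb\cong\fg_{\alpha_t}$, not $\fg_{-\alpha_t}$; you are evidently using the opposite Borel convention. Since the sign is applied consistently to every graded piece, it cancels in the end and the identity $K_{\tl q}=\sum_t b_t K_{\pi_t}$ is unaffected, but it is worth aligning the signs with the paper's choice of $\fb$ to avoid confusing a reader checking against (\ref{eq:cartanparab}).
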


The same argument allows us to compute (the pull-back to $\tl{E}$ of) the relative canonical class of $q:E\to X$. Set $D=\{1, \dots, k\}$; let  $I \subset D$ be the subset defining  the parabolic subgroup $P$, i.e. the set of indices such that the Lie subalgebra $\fp\subset\fg$ of $P$ satisfies:
\begin{equation}
\fp=\fh\oplus\bigoplus_{\alpha\in\Phi^+} \fg_{-\alpha}\oplus\bigoplus_{\alpha\in\Phi^+(I)} \fg_\alpha,
\label{eq:cartanparab}
\end{equation}
where $\Phi^+(I)$ denotes the subset of $\Phi^+$ generated by the simple roots  $\{\alpha_i,\,\,i\notin I\}$. Then, arguing as before, we have:

\begin{lemma}\label{lem:relcan2}
With the same notation as above, let us consider the integers $c_1,\dots,c_k$ determined by:
$$
\sum_{\beta\in\Phi^+(I)}\beta=\sum_{t=1}^kc_t\alpha_t.
$$ 
Then:
\begin{equation}\label{eq:canrel2}
\pi^*K_{q}=\sum_{t=1}^k(b_t-c_t)K_{\pi_{t}}.
\end{equation}
\end{lemma}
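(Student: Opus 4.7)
My plan is to reduce the statement to Lemma~\ref{lem:relcan} by invoking the relative canonical bundle formula for the composition $\tl{q}=q\circ\pi$, which yields $K_{\tl{q}}=K_{\pi}+\pi^{*}K_{q}$. Since Lemma~\ref{lem:relcan} already computes the left-hand side as $\sum_{t=1}^{k}b_{t}K_{\pi_{t}}$, the whole problem is reduced to proving the identity $K_{\pi}=\sum_{t=1}^{k}c_{t}K_{\pi_{t}}$; the stated formula then follows by subtraction.

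To establish this identity I would exploit the fact, recorded in Section~\ref{sec:notn} just before Lemma~\ref{lem:relcan}, that $\pi:\tl{E}\to E$ is itself a flag bundle, with fibre $G'/B'$, where $G'$ is the semisimple quotient of $P$. By the definition of $\Phi^{+}(I)$ in (\ref{eq:cartanparab}), the simple roots of $G'$ are $\{\alpha_{i}:i\notin I\}$ and its positive root system is exactly $\Phi^{+}(I)$, so the minimal parabolics of $G'$ containing $B'$ are indexed by $D\setminus I$. For every $t\notin I$ the inclusion $P_{t}\subset P$ makes $\pi_{t}:\tl{E}\to E_{P_{t}}$ into a $\P^{1}$-bundle over $E$, and on every fibre of $\pi$ it is identified with the projection $G'/B'\to G'/P'_{t}$ associated with $\alpha_{t}$.

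With this identification in place, the proof of Lemma~\ref{lem:relcan} applies verbatim to the flag bundle $\pi$: the restriction formula analogous to (\ref{eq:rel}) yields $\cO_{\tl{E}}(K_{\pi_{t}})|_{\pi^{-1}(e)}\cong\cO_{G'/B'}(K'_{t})$ for every $e\in E$ and every $t\notin I$, and the successive Lie bracket construction from \cite[Construction 1]{MOSW} recovers the relative tangent bundle $T_{\pi}$ from the relative tangent bundles of the $\pi_{t}$ with $t\notin I$. Summing the resulting contributions to the top exterior power and using that the $b$-coefficients of Lemma~\ref{lem:relcan} applied to the root system of $G'$ are precisely the $c_{t}$ defined in the statement, one obtains $K_{\pi}=\sum_{t\notin I}c_{t}K_{\pi_{t}}$. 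This equals $\sum_{t=1}^{k}c_{t}K_{\pi_{t}}$ because the hypothesis $\Phi^{+}(I)\subset\bigoplus_{i\notin I}\Z\alpha_{i}$ forces $c_{t}=0$ for every $t\in I$.

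The main, and essentially only, obstacle is the structural identification carried out in the second paragraph: one must verify that the $\P^{1}$-bundles $\pi_{t}$ with $t\notin I$ play, for $\pi$ viewed as a flag bundle, the role that the full collection of $\pi_{t}$'s plays for $\tl{q}$, and that the integers produced by Lemma~\ref{lem:relcan} for the root data of $G'$ really do coincide with the coefficients $c_{t}$ of the statement. Once this is granted, the proof is a formal subtraction with no further computation.
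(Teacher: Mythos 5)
Your proposal is correct and follows exactly the same route as the paper's own proof: both rewrite $\pi^{*}K_{q}=K_{\tl{q}}-K_{\pi}$ and compute each term by applying Lemma~\ref{lem:relcan}, once to $\tl{q}$ and once to the flag bundle $\pi:\tl{E}\to E$ with fibre $G'/B'$. You merely spell out the identification of the root data of $G'$ with $\Phi^{+}(I)$ in more detail than the paper does.
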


\begin{proof}
It is enough to note $\pi^*K_{q}$ equals the difference $K_{\tl{q}}-K_{\pi}$, where the first summand has been computed in Lemma \ref{lem:relcan}, and the second can be computed by applying the same lemma to the flag bundle
$\pi:\tl{E}\to E$.
\end{proof}


\section{Flag bundles over $\P^1$: remarks on Grothendieck's theorem}\label{sec:groth}

In this section we will study rational homogeneous bundles over $\P^1$. More concretely, since all of them are contractions of flag bundles, we will concentrate on the case of a $G/B$-bundle $\tl{q}:\tl E\to \P^1$. We will use the notation introduced in Section \ref{sec:notn}. 

Inspired by the case of projective bundles over $\P^1$, that are determined by the restriction of their relative tangent bundles to a minimal section, we will study sections of $\tl{\pi}$ that satisfy the following minimality condition:

\begin{definition}\label{defn:sections}
A section $s:\P^1\to\tl{E}$ of $\tl{q}$ is called {\it minimal} if, for every $x\in\P^1$, the irreducible component $H$ of the scheme $\Hom_{\P^1}(\P^1,\tl{E};x, s(x))$ parametrizing sections of $\tl{q}$ sending $x$ to $s(x)$ is zero-dimensional.
\end{definition}

The existence of minimal sections on the $G/B$-bundle $\tl{E}$ is an immediate consequence of the Bend and Break Lemma (see for instance \cite[Proposition 3.11]{De}), once we prove the existence of a section of any kind; this, in turn, may be obtained, either from a much more general result of Graber, Harris and Starr, \cite{GHS}, or by means of ad hoc arguments for each of the possible types of semisimple group $G$, or (as we will do later) by means of Grothendieck's theorem on principal $G$-bundles. 

\begin{definition}\label{def:}
Given $\sigma:\P^1\to\tl{E}$ a section of $\tl{q}:\tl{E}\to\P^1$, we may consider the projections ${\pi}_{t}:\tl{E}\to E_{t}$, where $P_t$, $t=1,\dots,k$.  As noted before, the ${\pi}_{t}$'s are $\P^1$-bundles, and we may consider the surfaces $\tl{E}_t$ appearing as the fiber products of $\tl{E}$ and the section $\sigma$, for every $t$:
$$
\xymatrix@=35pt{\tl{E}_t\ar[r]\ar[d]^{\pi_{t}}&\tl{E}\ar[d]^{{\pi}_{t}} \\
\P^1\ar@/^/[u]^{\sigma_t} \ar[ru]^{\sigma}\ar[r]_{{\pi}_{t}\circ \sigma}&\P^1}
$$
Then, for every $t=1,\dots,k$, $\tl{E}_t$ is a $\P^1$-bundle, that we call the {\it pivotal} $\P^1$-bundles of $\tl{q}$ with respect to $\sigma$.
\end{definition}

Note that the section $\sigma$ defines a section $\sigma_t:\P^1\to \tl{E}_t$. In the case in which $\sigma$ is minimal in $\tl{E}$, then it is also minimal in $\tl{E}_t$. In particular the integers $d_t=K_{\pi_t} \cdot \sigma(\P^1)$ are non negative. 
The main goal of this section is to show that a minimal section $\sigma$, together with the non negative integers $d_t$, determine completely the $G/B$-bundle $\tl{E}$, and the main tool we will use is Grothendieck's theorem on principal $G$-bundles over $\P^1$.

\subsection{Grothendieck's theorem}\label{ssec:grothen}

Let $G$ be a semisimple Lie group, and $H\subset G$ be a Cartan subgroup, which is a maximal abelian subgroup of $G$. Let $N$ denote the normalizer of $H$ in $G$, whose quotient $N/H$ is called the {\it Weyl group of $G$ with respect to $H$}. Given any smooth complex variety $X$ we may consider the cohomology of $X$ with values in the sheafified groups $H$ and $G$ (that, by abuse of notation, we denote with the same symbols), and the corresponding natural map $H^1(X,H)\to H^1(X,G)$. This map is equivariant with respect to the conjugation action of the groups $N$ and $H$. Since the latter acts trivially on both sets, and $N\subset G$ acts trivially on the second, we finally have a map:
\begin{equation}\label{eq:grothen}
H^1(X,H)/W\to H^1(X,G)
\end{equation}

Grothendieck's theorem says the following:

\begin{theorem}[Grothendieck's theorem (cf. \cite{Gro1})]\label{thm:groth}
With the same notation as above, for $X=\P^1$, the map (\ref{eq:grothen}) is a bijection. 
\end{theorem}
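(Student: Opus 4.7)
The plan is to split the claim into surjectivity and injectivity modulo $W$. Since $H$ is a complex torus of rank $k$, the group $H^1(\P^1,H)$ is canonically the cocharacter lattice $X_*(H)\cong\Z^k$, and $W=N/H$ acts on it through its natural action on $H$. So surjectivity says that every principal $G$-bundle on $\P^1$ admits a reduction of structure group to $H$, and injectivity modulo $W$ says that two such reductions giving isomorphic $G$-bundles are related by a Weyl group element.

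For surjectivity I would follow Grothendieck's strategy of reducing to the orthogonal case via the adjoint representation $\Ad:G\to \SO(\fg,\kappa)$, where $\kappa$ is the Killing form. A principal $G$-bundle $E_G$ on $\P^1$ yields an orthogonal bundle $E_G\times^G\fg$, and the crux is to show that any orthogonal bundle on $\P^1$ splits as a sum of hyperbolic planes $\cO(a_i)\oplus\cO(-a_i)$ (plus a trivial line in odd rank). This reduces the structure group to a maximal torus of $\SO(\fg)$, which via $\Ad$ lifts to an $H$-reduction of $E_G$. To prove the hyperbolic decomposition, I start from the Birkhoff--Grothendieck splitting of the underlying vector bundle and pick a line subbundle $\cO(a)\subset E$ of maximal degree; if $a>0$ it is automatically isotropic, since $\Hom(\cO(2a),\cO_{\P^1})=0$, and the non-degenerate form singles out a complementary line subbundle $\cO(-a)\subset E$, producing a hyperbolic plane. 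The orthogonal complement is again an orthogonal bundle of smaller rank on $\P^1$, and I iterate; the base case $a\le 0$ forces $E$ to be trivial by self-duality.

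For injectivity modulo $W$, suppose $\lambda,\mu\in X_*(H)$ define $G$-isomorphic bundles $E_\lambda\cong E_\mu$. An isomorphism gives two reductions to $H$ of a single $G$-bundle, and any two such reductions differ, up to an $H$-gauge transformation, by an element of $N_G(H)$, whose action on $H$-reductions factors through $W$. To see that the $W$-orbit of $\lambda$ is a complete invariant, I would fix a faithful representation $G\hookrightarrow \GL(V)$ and observe that the Birkhoff--Grothendieck slopes of the associated vector bundle $E_\lambda\times^G V$ are the integers $\langle\nu,\lambda\rangle$ as $\nu$ runs over the $H$-weights of $V$; since $W$-invariant polynomial functions on $X_*(H)\otimes\Q$ separate $W$-orbits, distinct orbits produce distinct multisets of slopes and hence non-isomorphic vector bundles.

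The hard part will be the hyperbolic decomposition in the surjectivity step: the Birkhoff--Grothendieck splitting is \emph{a priori} not compatible with the orthogonal form, so extracting isotropic line subbundles together with their hyperbolic partners requires the degree argument above. Once this step is in place, the descent from $\SO(\fg)$ back to $G$ via $\Ad$ is formal, and the injectivity half reduces to a weight computation.
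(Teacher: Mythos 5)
The paper does not prove Theorem \ref{thm:groth}: it is quoted from Grothendieck's paper \cite{Gro1}, with only the parenthetical remark in the introduction that Grothendieck reduces to the study of orthogonal bundles over $\P^1$ via the adjoint representation. Your surjectivity step correctly reproduces this strategy and the key lemma that orthogonal bundles on $\P^1$ split into hyperbolic planes, and the degree argument for extracting an isotropic line subbundle of maximal degree is essentially right (after the small correction needed to make the complementary copy of $\cO(-a)$ isotropic, by subtracting a suitable multiple of the inclusion of $\cO(a)$). However, the claimed descent from $\SO(\fg)$ back to $G$ is not formal. The image $\Ad(H)$ is in general a proper subtorus of a maximal torus of $\SO(\fg,\kappa)$, so reducing the $\SO(\fg)$-bundle $\ad(E_G)$ to an arbitrary maximal torus of $\SO(\fg)$ does not produce a reduction of $E_G$ to $H$: one must show that the line-bundle decomposition of $\ad(E_G)$ can be chosen compatibly with the Lie bracket so that it aligns with the Cartan decomposition $\fg=\fh\oplus\bigoplus_{\alpha}\fg_\alpha$, and then separately handle the lift across the isogeny $G\to G/Z(G)$ using appropriate cohomological vanishing over $\P^1$. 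This is the core technical work in Grothendieck's argument, and it is missing from your sketch.

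The injectivity half also has gaps. The assertion that two $H$-reductions of the same $G$-bundle differ, up to gauge, by an element of $N_G(H)$ is essentially a restatement of what has to be proved and cannot be taken for granted. The fallback argument via the slope multiset of a single faithful representation is moreover insufficient: for $G=\SL(2)\times\SL(2)$ and $V=V_1\oplus V_2$ (the standard representation on each factor), the cocharacters $\lambda=(1,2)$ and $\mu=(2,1)$ produce the same slope multiset $\{\pm 1,\pm 2\}$ but lie in different $W$-orbits, since $W=(\Z/2\Z)^2$ does not interchange the two factors. What is true is that the slope multisets of \emph{all} representations of $G$ jointly separate $W$-orbits on the cocharacter lattice, but this requires its own argument and does not follow directly from the separation of $W$-orbits by $W$-invariant polynomial functions, since a single representation yields only finitely many such invariants.
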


This theorem was originally  stated for the case of a reductive group $G$, but we will stick to the semisimple case, which is the one we are interested in.

\begin{remark}\label{rem:GrothP1}
The standard geometric interpretation of this theorem concerns the case $G=\PGL(r+1)$, where the theorem tells us that any $\P^r$-bundle over $\P^1$ is the projectivization of a direct sum of line bundles. For instance, for $r=1$ and a cocycle $\theta\in H^1(\P^1,G)$, any choice of a Cartan subgroup $H\subset G$ corresponds to a choice of two sections $C_0$ and $C_\infty$ of the corresponding $\P^1$-bundle $\tl{q}:\tl{E}\to \P^1$, satisfying that $C_0\cdot C_\infty=0$. These two sections correspond to two quotients $\cE\to\cO_{\P^1}(a_i)$, $i=1,2$, where $\tl{E}$ is isomorphic to the projectivization of a bundle $\cE\cong\cO_{\P^1}(a_1)\oplus\cO_{\P^1}(a_2)$. Typically $C_0$ is associated with the quotient to  $\cO_{\P^1}(a_1)$, when we choose $a_1\leq a_2$. Note that $C_0$ is a minimal section of the $\P^1$-bundle, in the sense of Definition \ref{defn:sections}.
\end{remark}

\begin{setup}\label{set:groth}
Throughout the rest of Section \ref{sec:groth}, $G$ will denote a semisimple Lie group, $H\subset B\subset G$ a pair of Cartan and Borel subgroups, and $\theta\in H^1(\P^1,H)$ a cocycle, whose image into $H^1(\P^1,G)$ defines a $G/B$-bundle $\tl{q}:\tl{E}\to X$.
\end{setup}
\begin{remark}\label{rem:Borelsec} 
Note that all the possible Borel subgroups of $G$ are conjugate, and the choice of different Borel subgroups produces isomorphic $G/B$-bundles. On the other hand, the choice of a preimage of $\theta$ into $H^1(\P^1,B)$ via the natural map $H^1(\P^1,B)\to H^1(\P^1,G)$ (in the standard language, the choice of a {\it reduction of $E_G$ via the inclusion $B\hookrightarrow G$}), is equivalent to the choice of a holomorphic section of $\tl{q}:\tl{E}\to X$.  
\end{remark}

\begin{remark}\label{rem:chambers}
 Following \cite{Gro1}, the kernel $L$ of the exponential map sending every $v\in\fh$ to $\exp(2\pi \imath v)\in H$ may be identified with the dual lattice of the lattice in $\fh^\vee$ generated by the root system $\Phi$. Moreover we have natural homomorphisms of groups:
 \begin{equation}\label{eq:exp}
 H^1(\P^1,H)\cong H^2(\P^1,L)\cong L\subset \fh,
 \end{equation}
 satisfying that the conjugation action of $W$ in $H^1(\P^1,H)$ corresponds to the restriction of the dual action of $W$ on $\fh$, as the Weyl group of $\Phi$. 
\end{remark}

\subsection{Fundamental sections of a flag bundle}\label{ssec:fundsec}

Let us start by noting some general facts on parabolic subgroups, and their Levi decompositions:

\begin{remark}\label{rem:Borels1}
Given a semisimple Lie group $G$ and a Cartan subgroup $H$, let us denote with gothic fonts $\fh,\fg$, as usual, their associated Lie algebras. Then $\fh\subset\fg$ defines a root system $\Phi\subset\fh^\vee$. Any choice of a base of positive simple roots $\Delta=\{\alpha_t,\,\,\,t=1,\dots,k\}$ corresponds to a choice of a fundamental chamber of the action of the Weyl group $W$ on $\fh$, $\cP=\{v\in\fh |\,\,\alpha_t\cdot v\geq 0, \,\,\forall t\}\subset\fh$, to a choice of a decomposition $\Phi=\Phi^+\cup\Phi^-$ of the root system in two sets of positive and negative roots, and to a choice of a Borel subalgebra $\fb:=\fh\oplus\bigoplus_{\beta\in\Phi^-}\fg_\beta$ and a corresponding Borel subgroup $B\subset G$ containing $H$ (\cite[27.3]{Hum1}).
Given such a Borel subgroup $B$, for every $t=1,\dots, k$ we denote by $P_t\supset B$ the parabolic subgroups of $G$ whose Lie algebras are $\fb\oplus\fg_{\alpha_t}$. Dividing $P_t$ by its unipotent radical first, and then by its center, we obtain a semisimple group $G_t$, isomorphic to $\PGL(2)$,
satisfying that the images $H_t$ of $H$, and $B_t$ of $B$, into $G_t$, are respectively a Cartan and a Borel subgroup of $G_t$. Note that we have an isomorphism $G_t/B_t\cong P_t/B$, and the natural map $P_t/B\subset G/B$ allows us to identify $G_t/B_t$ with the fiber of the canonical projection $G/B\to G/P_t$ passing by the class of the identity modulo $B$. Moreover, the induced homomorphism of Lie groups 
\begin{equation}\label{eq:degrees1}
\phi:H\to\prod_t H_t
\end{equation} 
is finite and surjective, since one may easily check that its differential at the identity gives an isomorphism of Lie algebras $d\phi_e:\fh\to\bigoplus_t \fh_t$.
\end{remark}

\begin{remark}\label{rem:Borels2}
Conversely, if we start from a semisimple group $G$ and a Borel subgroup $B$, we may consider the minimal parabolic subgroups of $G$ containing $B$, $P_1,\dots, P_k$. Considering, as above, the semisimple part $G_t$ of $P_t$, we have a natural surjective map $B\to \prod_tB_t$. Given, for every $t$, a Cartan subgroup $H_t\subset B_t$, the inverse image of $\prod_tH_t$ is a Cartan subgroup of $G$ contained in $B$.
\end{remark}

\begin{example}\label{rem:latticeP1}
In the case of $G=\PGL(2)$, each of the two possible choices of a basis of positive simple roots $\Delta=\{\alpha\}$ provides an isomorphism $\alpha:\fh\to \C$, under which the lattice $L$ defined in Remark \ref{rem:chambers} may be identified with $\Z\subset\C$. Now, if we start from a cocycle $\theta\in H^1(\P^1,H)$, we may choose $\Delta=\{\alpha\}$ so that the image of $\theta$ in $L$  via (\ref{eq:exp}) has nonnegative $\alpha$-coordinate. This integer is classically known as the {\it invariant} of the $\P^1$-bundle defined by $\theta$, which is the intersection of its minimal section $C_0$ with its relative canonical line bundle.
\end{example}
 
In our general setting, we may choose the fundamental chamber $\cP\subset\fh$ so that the image of $\theta\in H^1(\P^1,H)$ into $\fh$ belongs to $\cP$ and, by Remark \ref{rem:Borelsec}, we may assume, without loss of generality, that $B$ is the Borel subgroup of $G$ containing $H$ corresponding to this choice of $\cP$. The corresponding base of simple roots is denoted by $\Delta=\{\alpha_1,\dots,\alpha_k\}$. Any other Borel subgroup of $G$ containing $H$ is the conjugation of $B$ by an element $w\in W$, hence it will be denoted by $B_w$.
 
 \begin{definition}\label{def:fundsec}
 In the setup of \ref{set:groth}, with the same notation as above the image of $\theta\in H^1(\P^1,H)$ into each of the sets $H^1(\P^1,B_w)$, $w\in W$, defines a section of the $G/B$-bundle $\tl{q}:\tl{E}\to \P^1$, that we denote by $\sigma_w:\P^1\to C_w\subset \tl{E}$. These sections are called the {\it fundamental sections of $\tl{E}$ with respect to $\theta$}. In particular, the section corresponding to $B$ will be called the {\it minimal fundamental section} of $\tl{E}$ with respect to $\theta$, and denoted by $\sigma:\P^1\to C\subset \tl{E}$. 
 \end{definition}
 
 \begin{remark}\label{rem:chamberssections}
Note that the set of fundamental sections of $\tl{E}$ is in one to one correspondence with the set of possible Weyl chambers of $W$ in $\fh$, which is known to be bijective to $W$.
 \end{remark}

For instance, in the case of a $\P^1$-bundle treated in Remark \ref{rem:GrothP1}, given $H\subset G$ we have precisely two Borel subgroups containing it, each of them giving rise to one the (fundamental) sections $C_0$ or $C_\infty$. In our general setup, we may consider, for every Borel subgroup $B_w$ containing $H$, and each positive simple root $\alpha_t\in\Delta$, the pivotal $\P^1$-bundle $\tl{E}_{w,t}:=({\pi}_t\circ \sigma_w)^*\tl{E}$, fitting in the cartesian square: 
$$
\xymatrix@=35pt{\tl{E}_{w,t}\ar[r]\ar[d]^{{\pi}_{w,t}}&\tl{E}\ar[d]^{{\pi}_t} \\
\P^1\ar@/^/[u]^{\sigma_{w,t} }
\ar[r]_{{\pi}_t\circ \sigma_w} \ar[ru]_{\sigma_w}&{E}_t}
$$
Let us denote by $C_{w}$ the image of the section $\sigma_w$. On the other hand, we may consider the reflection $r_t\in W$ corresponding to a positive simple root $\alpha_t$, and the section $\sigma_{r_t w}:\P^1\to \tl{E}_{r_t w,t}$, with image $C_{r_t w}$. Then we may claim the following, that follows from \cite[29.3 Lemma B]{Hum1}:

\begin{proposition}\label{prop:cobord}
With the same notation as above, for every $w\in W$ and every $\alpha_t\in\Delta$, $$\tl{E}_{w,t}=\tl{E}_{r_t w,t},$$ and the sections $C_{w}$ and $C_{r_t w}$ are the $C_0$ and $C_\infty$ sections of the $\P^1$-bundle $\tl{E}_{w,t}$.
\end{proposition}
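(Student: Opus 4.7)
The plan is to verify everything fiberwise, on $\tl{q}^{-1}(x)\cong G/B$, and to invoke Humphreys' Lemma B in \cite[29.3]{Hum1}, which states that two Borel subgroups of $G$ containing a common Cartan $H$ and related by a simple reflection $r_t$ are exactly the two Borel subgroups of $G$ containing $H$ lying inside a single minimal parabolic of semisimple rank one (namely, a conjugate of $P_t$).

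First I would prove the identity $\tl{E}_{w,t}=\tl{E}_{r_t w,t}$. By definition these pivotal bundles are pullbacks of $\pi_t:\tl{E}\to E_t$ along the sections $\pi_t\circ\sigma_w$ and $\pi_t\circ\sigma_{r_t w}$, so the claimed equality amounts to showing that these two sections of $E_t\to\P^1$ coincide. Now $\sigma_w$ corresponds to the reduction of the principal $G$-bundle $E_G$ to $B_w$, so $\pi_t\circ\sigma_w$ corresponds to the reduction of $E_G$ to the minimal parabolic of semisimple rank one containing both $B_w$ and $B_{r_t w}$, which by the cited lemma is a conjugate of $P_t$. Since the same parabolic reduction is also produced from $\sigma_{r_t w}$, the two sections agree.

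Second, I would identify $\tl{E}_{w,t}$, which is a $\P^1$-bundle over $\P^1$, with a $G_t/B_t$-bundle for $G_t\cong\PGL(2)$ the semisimple quotient of the Levi factor of the common minimal parabolic, equipped with a reduction to the Cartan $H_t\subset G_t$ inherited from $\theta\in H^1(\P^1,H)$ through the surjection $H\to\prod_s H_s$ of Remark \ref{rem:Borels1}. Grothendieck's theorem in the form of Remark \ref{rem:GrothP1} and Example \ref{rem:latticeP1} then supplies two distinguished sections of this bundle, associated with the two Borel subgroups of $G_t$ containing $H_t$; by construction these are its $C_0$ and $C_\infty$ sections. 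It remains to check that, under the quotient $H\to H_t$, the Borels $B_w$ and $B_{r_t w}$ of $G$ descend to the two Borels of $G_t$ containing $H_t$, which is once more the content of Humphreys 29.3 Lemma B. Hence the restrictions of $\sigma_w$ and $\sigma_{r_t w}$ to $\tl{E}_{w,t}$ are precisely $C_0$ and $C_\infty$.

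The main technical obstacle is keeping straight the various conjugation conventions --- the relation $B_w=n_w B n_w^{-1}$ for a chosen lift $n_w\in N_G(H)$, the identification $G/B_w\cong G/B$ used to realize each $\sigma_w$ as a section of $\tl{E}$, and the concrete form of the projection $H\to H_t$ --- so that the pair whose sections land in the same $\pi_t$-fiber comes out as $(\sigma_w,\sigma_{r_tw})$ rather than $(\sigma_w,\sigma_{wr_t})$. Once the conventions are pinned down, the argument reduces, as the statement indicates, to Humphreys 29.3 Lemma B together with the rank-one case of Grothendieck's theorem.
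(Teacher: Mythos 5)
Your approach is essentially the one the paper intends: the paper gives no proof beyond the citation ``follows from \cite[29.3 Lemma B]{Hum1}'', and your write-up supplies exactly the two ingredients that make that citation do the work --- the existence of a unique rank-one parabolic containing both Borels, and the rank-one case of Grothendieck's theorem (Remark \ref{rem:GrothP1}, Example \ref{rem:latticeP1}) applied to the resulting $\P^1$-bundle. The reduction of the claim $\tl E_{w,t}=\tl E_{r_tw,t}$ to the equality of the two sections $\pi_t\circ\sigma_w$ and $\pi_t\circ\sigma_{r_tw}$ of $E_t$, and the identification of $C_w$, $C_{r_tw}$ with $C_0$, $C_\infty$ via the quotient $H\to H_t$, are both correct.

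The one genuine gap is the convention issue you flag at the end but leave unresolved, and it is not a cosmetic one: it determines whether the statement reads $r_tw$ or $wr_t$. With the convention you write down, $B_w=n_wBn_w^{-1}$, the unique parabolic of type $t$ containing $B_w$ is $n_wP_tn_w^{-1}$, and the two Borels of that parabolic containing $H$ are $n_wBn_w^{-1}=B_w$ and $n_w n_{r_t} B n_{r_t}^{-1} n_w^{-1}=B_{wr_t}$; so your stated convention produces the pair $(\sigma_w,\sigma_{wr_t})$, \emph{not} $(\sigma_w,\sigma_{r_tw})$. To match the Proposition as written one must take the opposite convention $B_w=n_w^{-1}Bn_w$ (equivalently, read ``conjugation of $B$ by $w$'' as $w^{-1}Bw$), under which the second Borel becomes $n_w^{-1}n_{r_t}Bn_{r_t}^{-1}n_w=B_{r_tw}$. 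You should carry out this short computation rather than leave it as ``once the conventions are pinned down''; as it stands, the single explicit convention in your proof is the one that contradicts the statement being proved.
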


\begin{remark}\label{rem:whoismin}
The correspondence  between $\{C_{w},C_{r_t w}\}$ and $\{C_0,C_\infty\}$ depends on the particular choices of $t$ and $w$, and may be expressed as follows: $C_w$ is the $C_0$ section of $\tl{E}_{w,t}$ if and only if $w(\alpha_t)$ is a positive root. 
In the particular case of $B_w=B$, it follows that the corresponding minimal fundamental section $C$ is a minimal (fundamental) section for every pivotal $\P^1$-bundle $\tl{E}_{t}$ containing it.
\end{remark}

\begin{remark}\label{rem:P1cocycles}
Note also that each pivotal $\P^1$-bundle $\tl{E}_{t}$ is defined by the image $\theta_t$ of the cocycle $\theta$ via the natural map $H^1(\P^1,H)\to H^1(\P^1,H_t)$, induced by the group homomorphism $H\to H_t$ defined in Remark \ref{rem:Borels1}. 
\end{remark}

\subsection{The tag of a flag bundle}\label{ssec:invariant}

Let us consider now the differential at the identity of the homomorphism $\phi$ of complex tori (\ref{eq:degrees1}), together with the corresponding exponential maps defined in Remark \ref{rem:chambers}. We have a commutative diagram of group homomorphisms, with exact rows: 
\begin{equation}\label{eq:degrees2}
\xymatrix@=45pt{L\ar[d]\ar[r]&\fh\ar[r]^{\exp(2\pi \imath \,\underline{\hspace{0.15cm}})}\ar[d]^{\phi_{*e}}&H\ar[d]^{\phi} \\
\bigoplus_t L_t \ar[r]&\bigoplus_t\fh_t
\ar[r]_{\exp(2\pi \imath \,\underline{\hspace{0.15cm}})} &\prod_tH_t}
\end{equation}
where $L_t$ denotes the kernel lattice for each $H_t$.

\begin{definition}\label{def:invariant}
With the same notation as above, denoting by $\alpha'_t\in\fh_t^*$ the positive root associated to the Borel subgroup $B_t$, for each $t$, the homomorphism of lattices:
$$
\xymatrix@=35pt{L\ar[r]_{\phi_{*e}}\ar@/^{4mm}/[rr]^{\delta}&\bigoplus_t L_t \ar[r]_{\oplus_t\alpha'_t} &\Z^k}
$$ 
is called the {\it tagging map} of $G$, and the image of $\theta\in H^1(\P^1,H)\cong L$ via $\delta$, $\delta(\theta)$ is called the {\it tag of the $G/B$-bundle $\tl{q}:\tl{E}\to \P^1$ defined by $\theta$}.  \end{definition}

\begin{proposition}\label{prop:grothunique}
With the same notation as above, the $G/B$-bundle $\tl{q}:\tl{E}\to \P^1$ defined by $\theta$ is completely determined by its tag $\delta(\theta)=(d_1,\dots,d_k)$. Moreover, for every $t$, $d_t=K_{{\pi}_t}\cdot C$, being $C$ the minimal fundamental section of $\tl{E}$ introduced in Definition \ref{def:fundsec}.
\end{proposition}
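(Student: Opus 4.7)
The plan is to separate the statement into its two assertions and handle each using the commutative diagram (\ref{eq:degrees2}) together with the previous remarks.

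I would first dispose of the intersection-number identity $d_t = K_{\pi_t}\cdot C$. The key observation is that by Remark \ref{rem:whoismin}, the minimal fundamental section $C$ coincides with the $C_0$ section of every pivotal $\P^1$-bundle $\tl{E}_t$ (since $w=e$ sends $\alpha_t$ to itself, which is positive). Since $K_{\pi_t}$ restricts to the relative canonical of the $\P^1$-bundle $\tl{E}_t\to\P^1$ on the surface $\tl{E}_t$, the number $K_{\pi_t}\cdot C$ is precisely the classical invariant of $\tl{E}_t$ in the sense of Example \ref{rem:latticeP1}. By Remark \ref{rem:P1cocycles}, $\tl{E}_t$ is defined by the cocycle $\theta_t\in H^1(\P^1,H_t)$ obtained from $\theta$ via the map induced by the homomorphism $H\to H_t$ of Remark \ref{rem:Borels1}. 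Tracing through the commutative diagram (\ref{eq:degrees2}) shows that the image of $\theta_t$ in $L_t$ is the $t$-th component of $\phi_{*e}(\theta)$, and evaluating $\alpha'_t$ on it yields exactly the $t$-th coordinate $d_t$ of the tag $\delta(\theta)$. This is the classical invariant, so the claim follows.

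For the uniqueness claim, I would invoke Grothendieck's Theorem \ref{thm:groth}: the $G/B$-bundle is determined by the image of $\theta$ in $H^1(\P^1,G)$, hence by the $W$-orbit of $\theta$ in $H^1(\P^1,H)\cong L$. Since by our choice of $B$ the cocycle $\theta$ lies in $L\cap\cP$ and $\cP$ is a fundamental domain for the Weyl action, it is enough to verify that the tagging map $\delta:L\to\Z^k$ is injective on $L\cap\cP$ (in fact on all of $L$). This in turn is immediate from the structure of $\delta$: $\phi_{*e}:\fh\to\bigoplus_t\fh_t$ is an isomorphism of Lie algebras by Remark \ref{rem:Borels1}, and each $\alpha'_t:\fh_t\to\C$ is a nonzero linear functional on a one-dimensional vector space, hence an isomorphism. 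Therefore $\delta$ is the restriction to the lattice $L$ of an isomorphism of $\C$-vector spaces $\fh\to\C^k$, and so is injective. Combined with the first part, this shows that $\tl{E}$ is recovered from $(d_1,\dots,d_k)$.

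The only step that requires any care is confirming that the isomorphism $\phi_{*e}$, which by construction is dual to the embedding of the span of the simple coroots into $\fh^\vee$, really makes the composition $(\oplus_t\alpha'_t)\circ\phi_{*e}$ an isomorphism on the complexification; but this is built into the definition of simple roots versus the coroot basis, so no genuine obstacle appears. Everything else is a clean translation between the lattice picture of Remark \ref{rem:chambers} and the geometry of the pivotal $\P^1$-bundles.
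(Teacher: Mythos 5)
Your proposal is correct and follows essentially the same route as the paper. The paper's proof is terser: for the uniqueness claim it simply notes that $L\to\bigoplus_t L_t$ is injective because $\phi_{*e}$ is an isomorphism (leaving implicit that composing with $\oplus_t\alpha'_t$ preserves injectivity and that $\theta\in L\cap\cP$ with $\cP$ a fundamental domain suffices by Grothendieck's theorem), and for the intersection identity it just observes that $d_t$ is the invariant of the pivotal $\P^1$-bundle $\tl{E}_t$; you spell out both of these implicit steps but there is no difference in substance.
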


\begin{proof}
For the first part it is enough to check that the map $L\to\bigoplus_t L_t$ is injective, and this follows from the fact that $\phi_{*e}:\fh\to \bigoplus_t\fh_t$ is an isomorphism. For the second, note that $d_t$ equals the invariant of the corresponding pivotal $\P^1$-bundle $\tl{E}_t$, hence it may be interpreted as the intersection number of the relative canonical divisor $K_{{\pi}_t}$ with the minimal fundamental section $C$. 
\end{proof}

\begin{remark}\label{rem:invariants}
As a consequence of Proposition \ref{prop:grothunique}, the tag of any $G/B$-bundle over $\P^1$ with respect to a cocycle $\theta$ lies in $\Z_{\geq 0}^k$. 
Then $\delta(\theta)$ shall be thought of as the choice of a non negative integer $d_t$ for every positive simple root of $G$ (with respect to the choice of a base of simple roots, as in Remark \ref{rem:Borels1}). In particular it is natural to represent it as a {\it tagged Dynkin diagram}, i.e. by ``tagging'' the $t$-th node of the Dynkin diagram, corresponding to the root $\alpha_t$, with the non negative integer $d_t$. A tagged Dynkin diagram obtained in this way is called {\it admissible with respect to} $G$. Note that a flag manifold may be written as the quotient of different semisimple groups with the same Dynkin diagrams, for which the concept of admissibility is not the same, in general. Hence it is not true that, for a $G/B$-bundle over $\P^1$, any $k$-tuple of non negative integers is admissible with respect to $G$. \end{remark}

\begin{example}
For instance, a $\P^n$-bundle $\P(\bigoplus_i\cO_{\P^1}(a_i))$, with $a_0\leq\dots,\leq a_n$, is determined by the tag $(a_1-a_0,\dots,a_n-a_{n-1})$ on the Dynkin diagram ${\rm A}_n$ (with the standard numbering of its nodes). Hence, every tag is admissible for the group $\PGL(n+1)$, whereas for the group $G=\SL(n+1)$, the lattice $\delta(L)$ of admissible tags has index $n$ in $\Z^n$.
\end{example}

\subsection{Minimal fundamental sections vs Minimal sections}\label{ssec:uniqueness}

Finally we will show the relation between the minimal fundamental sections of a $G/B$-bundle and its minimal sections. 

\begin{lemma}\label{lem:minismin1}
With the same notation as above, the minimal fundamental section $C$ of a $G/B$-bundle defined by a cocycle $\theta\in H^1(\P^1,H)$ is a minimal section in the sense of Definition \ref{defn:sections}.
\end{lemma}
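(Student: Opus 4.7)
The plan is to reduce the problem to a cohomology vanishing statement on $C$, which follows from the non-negativity of the tag. By standard deformation theory, minimality of $\sigma$ in the sense of Definition~\ref{defn:sections} is implied by the vanishing $H^0(\P^1,\sigma^*T_{\tl q}(-x))=0$ for any $x\in\P^1$: indeed the Zariski tangent space to $\Hom_{\P^1}(\P^1,\tl E;x,\sigma(x))$ at $[\sigma]$ is precisely this space, and an upper bound of zero on it forces the component containing $\sigma$ to be zero-dimensional. So the whole game is to compute $\sigma^*T_{\tl q}$ and show that all its summands have non-positive degree.

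To do this, I would write $\tl E=E_H\times^H G/B$, where $E_H\to\P^1$ is the principal $H$-bundle defined by the chosen lift $\theta\in H^1(\P^1,H)$ of the cocycle. Because $H\subset B$ fixes the coset $eB\in G/B$, the image of $E_H\times\{eB\}$ is a well-defined section, which one checks is precisely the minimal fundamental section $C$ given by Definition~\ref{def:fundsec}. Restricting the relative tangent bundle to $C$ gives
\[
\sigma^* T_{\tl q} \;=\; E_H\times^H (\fg/\fb),
\]
and since $\fg/\fb=\bigoplus_{\beta\in\Phi^+}\fg_\beta$ as an $H$-module with $H$ acting on $\fg_\beta$ through the character $\beta$, this yields a splitting $\sigma^* T_{\tl q}=\bigoplus_{\beta\in\Phi^+}L_\beta$ into line bundles on $\P^1$.

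The next step is to compute the degrees. For a simple root $\alpha_t$, the subspace $\fg_{\alpha_t}\subset\fg/\fb$ is exactly $\fp_t/\fb$, so $L_{\alpha_t}=\sigma^*T_{\pi_t}$. By Remark~\ref{rem:whoismin}, $C$ is the minimal ($C_0$) section of the pivotal $\P^1$-bundle $\tl E_t$; hence its self-intersection there is $-d_t$ and $\deg L_{\alpha_t}=-d_t$. For a general positive root $\beta=\sum_t a_t\alpha_t$ with $a_t\geq 0$, the additivity of characters under multiplication in $H$ translates, under the exponential identification $H^1(\P^1,H)\cong L$ of Remark~\ref{rem:chambers}, into additivity of degrees of associated line bundles; thus
\[
\deg L_\beta \;=\; \sum_t a_t\, \deg L_{\alpha_t} \;=\; -\sum_t a_t\, d_t \;\leq\; 0,
\]
where the last inequality uses $d_t\geq 0$ from Proposition~\ref{prop:grothunique} (the tag lives in $\Z_{\geq 0}^k$). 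Twisting by $\cO_{\P^1}(-x)$ then makes every summand have degree $\leq -1$, so each $H^0(\P^1,L_\beta(-x))$ vanishes, and the required global vanishing follows.

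The only real subtlety I anticipate is pinning down the sign convention in the isomorphism $H^1(\P^1,H)\cong L$ and in the induced degree formula $\deg L_\chi=\chi(\theta)$ (or its negative), so that the equality $\deg L_{\alpha_t}=-d_t$ is consistent with $d_t=K_{\pi_t}\cdot C\geq 0$. Once this sign is fixed by comparing with the $\PGL(2)$ case of Remark~\ref{rem:GrothP1}—where the minimal section of $\P(\cO\oplus\cO(d))$ has relative tangent bundle of degree $-d$—the rest of the argument is a straightforward weight-space bookkeeping.
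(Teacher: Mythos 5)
Your argument is correct and is essentially the paper's proof, written out in full detail. The paper likewise deduces minimality from the non-negativity of the intersection numbers $K_{\pi_t}\cdot C=d_t$, but compresses the weight-space bookkeeping by citing the Lie-bracket construction of $T_{\tl q}$ from the line bundles $\cO_{\tl E}(K_{\pi_t})$ (Construction~1 of \cite{MOSW}) in place of your explicit decomposition $\sigma^*T_{\tl q}=E_H\times^H(\fg/\fb)=\bigoplus_{\beta\in\Phi^+}L_\beta$ with $\deg L_\beta=-\sum_t a_t d_t\leq 0$.
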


\begin{proof}
Since, by construction, $C$ has degree greater than or equal to zero with respect to the relative canonical divisors $K_{{\pi}_t}$, we may claim that the relative tangent bundle of $\tl{E}$ over $C$, that can be obtained (cf. \cite[Construction 1]{MOSW}) from the relative canonical bundles $\cO_{\tl{E}}(K_{{\pi}_t})$ by means of successive Lie brackets, has no positive summands, and hence $C$ has no deformations with a point fixed.
\end{proof}

Conversely, we will show now that a minimal section of a $G/B$-bundle is a minimal fundamental section, for a certain choice of a Cartan subgroup. In other words, minimal sections are not necessarily unique, but they are determined by the different choices of Cartan subgroups of $G$. 

\begin{proposition}\label{prop:minismin2}
Let $\tl{q}:\tl{E}\to \P^1$ be a $G/B$-bundle over $\P^1$, and let $\sigma':\P^1\to C'\subset \tl{E}$ be a minimal section in the sense of Definition \ref{defn:sections}. Then there exists a Cartan subgroup $H'\subset G$ such that $C'$ is a minimal fundamental section with respect to $H'$.
\end{proposition}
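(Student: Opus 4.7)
The plan is to use the minimal section $\sigma'$ to produce a reduction of the structure group of the principal $G$-bundle $E_G$ associated to $\tl{E}$, first to the Borel subgroup $B$ fixed in Setup \ref{set:groth}, and then further to a Cartan subgroup $H' \subset B$.

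By Remark \ref{rem:Borelsec}, the section $\sigma'$ determines a reduction of $E_G$ to a principal $B$-bundle $E'_B \subset E_G$. Next, invoking the key ingredient in Grothendieck's proof of Theorem \ref{thm:groth}---that principal $U$-bundles over $\P^1$ are trivial, which follows by devissage from $H^1(\P^1, \cO_{\P^1}) = 0$ along a composition series of $U$ with $\G_a$-quotients---one obtains that the natural map $H^1(\P^1, H') \to H^1(\P^1, B)$ is bijective for any Cartan $H' \subset B$. Hence $E'_B$ further reduces to a principal $H'$-bundle $E'_{H'} \subset E'_B$, defined by some cocycle $\theta' \in H^1(\P^1, H')$. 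This yields an identification $\tl{E} \cong E'_{H'} \times^{H'} G/B$ under which $\sigma'$ becomes the fundamental section associated to the $H'$-fixed point $eB \in G/B$ for the data $(\theta', H', B)$, in the sense of Definition \ref{def:fundsec}.

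To conclude that $\sigma'$ is the \emph{minimal} fundamental section, it suffices, by the discussion preceding Definition \ref{def:fundsec}, to check that $B$ corresponds to the Weyl chamber of $\fh'$ containing the image of $\theta'$; by Proposition \ref{prop:grothunique} this amounts to showing that the tag $\delta(\theta') = (d_1, \ldots, d_k)$ satisfies $d_t \geq 0$ for every $t$, where $d_t = K_{\pi_t} \cdot C'$. This follows from the minimality of $\sigma'$: each pivotal $\tl{E}_t$ sits as a surface inside $\tl{E}$, so any deformation of $\sigma'_t$ in $\tl{E}_t$ fixing a point induces a deformation of $\sigma'$ in $\tl{E}$ fixing the same point; hence $\sigma'_t$ is minimal in $\tl{E}_t$. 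For a $\P^1$-bundle $\P(\cO_{\P^1} \oplus \cO_{\P^1}(d))$ over $\P^1$ the minimal section has non-negative intersection $d$ with the relative canonical divisor, giving $d_t \geq 0$ as required.

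The main technical point is the reduction of $E'_B$ to a Cartan. The resulting $H' \subset B$ is not canonical---different choices of splitting of $B \to B/U$ yield $U$-conjugate Cartans---but this non-uniqueness is precisely what accommodates the existential nature of the statement: different minimal sections correspond, in general, to different Cartan subgroups.
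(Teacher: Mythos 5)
The overall architecture of your proof matches the paper's---reduce the $B$-bundle from the section, then further reduce to a Cartan, then check the tag is nonnegative---but the crucial middle step contains a false claim.

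You assert that the natural map $H^1(\P^1, H') \to H^1(\P^1, B)$ is bijective for any Cartan $H' \subset B$, deducing this from the triviality of principal $U$-bundles over $\P^1$. This is not correct, and the devissage you invoke does not yield it: the untwisted vanishing $H^1(\P^1, U) = 0$ does not control the relevant obstruction, which lives in $H^1$ of a \emph{twisted} form $U_{P_{H'}}$. After filtering, the graded pieces are line bundles of degree given by pairing negative roots against the degree of the $H'$-reduction, and these may well have nonzero $H^1$. Concretely, take $G = \SL(2)$, $B$ lower triangular, $E = \cO_{\P^1}(-1) \oplus \cO_{\P^1}(1)$, and let $M \cong \cO_{\P^1}(-2)$ be a line subbundle embedded by a nowhere-vanishing pair of sections. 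The resulting $B$-bundle has $H$-reduction with associated graded $\cO(-2) \oplus \cO(2)$, but the extension $0 \to M \to E \to E/M \to 0$ does not split (since $\cO(-1)\oplus\cO(1) \not\cong \cO(-2)\oplus\cO(2)$), so this $B$-bundle does \emph{not} reduce to any Cartan inside $B$. Geometrically, this $B$-bundle corresponds to a \emph{non-minimal} section of $\P(E)$.

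So the reduction from $B$ to $H'$ is not automatic; it requires the minimality of $\sigma'$, which you invoke only at the final step to check $d_t \geq 0$, not where you actually need it. The paper sidesteps the twisted-cohomology issue entirely by projecting to the rank-one groups $B_t$: minimality of $C'$ in each pivotal $\P^1$-bundle $\tl{E}'_t$ gives, by the classical rank-one case, a Cartan $H'_t \subset B_t$ and a cocycle $\theta'_t \in H^1(\P^1, H'_t)$; then Remark \ref{rem:Borels2} assembles these into a Cartan $H' \subset B$ to which the cocycle lifts. To repair your argument, you should either follow that rank-one reduction, or else replace the blanket bijectivity claim with the statement that the \emph{specific} $B$-bundle $E'_B$ (coming from a minimal section) reduces to $H'$, because the minimality forces the obstruction classes in the twisted line bundles $H^1(\P^1, \cO(\beta(\theta'')))$, $\beta \in \Phi^-$, to vanish. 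The closing observation about the non-uniqueness of $H'$ is fine.
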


\begin{proof}
The section $\sigma'$ provides an inverse image $\theta'$ in $H^1(X,B)$ of the cocycle in $H^1(X,G)$ defining $\tl{E}$. With the same notation as in Remark \ref{rem:Borels1}, we consider the images $\theta'_t$ of $\theta'$ into $H^1(\P^1,B_t)$, $t=1,\dots,k$, defined by the minimal parabolic subgroups $P_t$ containing $B$; they define $k$ pivotal $\P^1$-bundles $\tl{E}'_t$ containing $C'$ as a section. Since $C'$ is minimal in $\tl{E}$ in the sense of Definition \ref{defn:sections}, it is also minimal in each $\tl{E}'_t$. But on a $\P^1$-bundle it is clear that a minimal section is a minimal fundamental section, from what it follows that $\theta'_t\in H^1(\P^1,B_t)$ has a (unique) inverse image in $H^1(\P^1,H'_t)$, for certain Cartan subgroups $H'_t$, $t=1,\dots, k$. By Remark \ref{rem:Borels2}, the inverse image of $\prod_tH'_t$ into $B$ is a Cartan subgroup $H'$, and the $k$-tuple $(\theta'_1,\dots,\theta'_k)$ has a unique inverse image $\theta''\in H^1(\P^1,H')$, which maps into $\theta'\in H^1(\P^1,B)$. We conclude by noting that, by construction, $C'$ is a minimal fundamental section of $\tl{E}$ with respect to $H'$.
\end{proof}

\begin{remark}\label{rem:invariant}
Given a $G/B$-bundle $\tl{q}$, we have defined its tag with respect to a choice of Cartan subgroup $H\subset G$ and a cocycle $H^1(\P^1,H)$. But two of these possible choices, say $\theta\in H^1(\P^1,H)$ and $\theta'\in H^1(\P^1,H')$, are related by conjugation with an element $g\in G$, $H'=gHg^ {-1}$.  We may then consider a holomorphic map $f:\C \to G$ satisfying $f(0)=e$ and $f(1)=g$. We choose $B$ and the minimal section $C$ with respect to $H$ as above, and consider, for every $z\in \C$, the corresponding cocycle $f(z)\theta f(z)^{-1}\in H^1(\P^1,f(z)H f(z)^{-1})$ and its image into $H^1(\P^1,f(z)B f(z)^{-1})$, that defines the minimal fundamental section of $\tl{q}$ with respect to $f(z)H f(z)^{-1}$. We conclude that any two minimal fundamental sections of $\tl{q}$ are numerically equivalent, hence $\delta(\theta)$ depends only on the isomorphism class of the $G/B$-bundle. 
\end{remark}

Summing up, we may write the following statement:

\begin{theorem}\label{thm:intergrothen}
Let $G$ be a semisimple Lie group, and $B\subset G$ be a Borel subgroup of $G$. 
For any admissible tagged Dynkin diagram $\cD$ there exist a $G/B$-bundle $\tl{q}:\tl{E}\to \P^1$, unique up to isomorphism, satisfying that the self-intersection of a minimal section $C$ of $\tl{q}$ in the $t$-th pivotal $\P^1$-bundle associated to $C$ is equal to the tag $d_t$ of the corresponding $t$-th node of $\cD$. 
\end{theorem}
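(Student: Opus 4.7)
The plan is to assemble the results of Section \ref{sec:groth} (principally Proposition \ref{prop:grothunique}, Lemma \ref{lem:minismin1}, Proposition \ref{prop:minismin2}, and Remark \ref{rem:invariant}) into a single statement, with Grothendieck's Theorem \ref{thm:groth} as the underlying input.

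For existence, I would start from an admissible tagged Dynkin diagram $\cD$ with tags $(d_1,\dots,d_k)$. By the very definition of admissibility (Remark \ref{rem:invariants}), this tuple lies in the image $\delta(L)\subset \Z^k$ of the tagging map, so there exists $\theta \in L \cong H^1(\P^1, H)$ with $\delta(\theta) = (d_1,\dots,d_k)$. Choose the Borel $B \supset H$ whose fundamental chamber in $\fh$ contains the image of $\theta$; Grothendieck's theorem then associates to $\theta$ a $G/B$-bundle $\tl{q}:\tl{E}\to \P^1$. Lemma \ref{lem:minismin1} guarantees that the minimal fundamental section $C\subset \tl{E}$ relative to $\theta$ is a minimal section in the sense of Definition \ref{defn:sections}, while Proposition \ref{prop:grothunique} provides $K_{\pi_t}\cdot C = d_t$ for every $t$. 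On the pivotal $\P^1$-bundle $\tl{E}_t$, where $C$ sits as a minimal section, this intersection number is exactly the invariant of the $\P^1$-bundle, i.e., the self-intersection of $C$ (cf. Remark \ref{rem:latticeP1}).

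For uniqueness, suppose $\tl{q}':\tl{E}'\to \P^1$ is any $G/B$-bundle admitting a minimal section $C'$ with the same numerical data $(d_1,\dots,d_k)$. By Proposition \ref{prop:minismin2}, $C'$ is a minimal fundamental section of $\tl{E}'$ with respect to some Cartan subgroup $H'\subset G$, so via Grothendieck's theorem applied to $H'$ the bundle $\tl{E}'$ comes from a cocycle $\theta'\in H^1(\P^1, H')$. Proposition \ref{prop:grothunique} applied to the pair $(H', \theta')$ gives $\delta(\theta') = (d_1,\dots,d_k)$. Finally, Remark \ref{rem:invariant} asserts that the tag is an invariant of the isomorphism class of the $G/B$-bundle, independent of the choice of Cartan, so $\delta(\theta)=\delta(\theta')$ forces $\tl{E}\cong\tl{E}'$.

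The main delicate point — and really the only place where some care is needed — is that minimal sections are not unique, and different minimal sections are minimal fundamental sections with respect to different Cartan subgroups. This could a priori obstruct the uniqueness argument, but it has already been settled in Section \ref{ssec:uniqueness}: Proposition \ref{prop:minismin2} converts any minimal section into the fundamental-section picture, and Remark \ref{rem:invariant} confirms that the resulting tag does not depend on the intermediate choice. Once these are in place, the theorem is essentially a bookkeeping packaging of the preceding results.
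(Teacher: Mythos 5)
Your proposal is correct and matches the paper's intent: the paper presents Theorem \ref{thm:intergrothen} with the phrase ``Summing up'' and gives no standalone proof, so the argument is exactly the assembly of Grothendieck's Theorem \ref{thm:groth}, Proposition \ref{prop:grothunique}, Lemma \ref{lem:minismin1}, Proposition \ref{prop:minismin2}, and Remark \ref{rem:invariant} that you carry out. One small remark on attribution in the uniqueness step: Remark \ref{rem:invariant} only establishes that the tag is \emph{well-defined} on the isomorphism class of the $G/B$-bundle (independent of the auxiliary Cartan); the implication that equal tags force isomorphic bundles rests on the injectivity of $\delta$ asserted in the first part of Proposition \ref{prop:grothunique}, which you do invoke earlier, so the logic is sound even if the final sentence slightly misplaces which result does the forcing.
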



\section{An application to the Campana--Peternell Conjecture}\label{sec:appCP}

Along this section, which is devoted to the proof of Theorem \ref{thm:RHVMRT}, $X$ will be a {\it CP-manifold}, that is a Fano manifold with nef tangent bundle. We will consider an unsplit complete family $\cM\stackrel{p}{\longleftarrow}\cU\stackrel{q}{\longrightarrow}X$ of rational curves in $X$. By definition, a {\it complete family of rational curves} is the universal family of curves parametrized by an irreducible component $\cM$ of $\rat^n(X)$. Then, the condition on the nefness of $T_X$ tells us that the evaluation morphism $q$ is dominant and smooth (see, for instance, \cite[Prop. 2.10]{MOSWW}).  For any $x\in X$, the smooth subvariety $q^{-1}(x)$, which is assumed to be a rational homogeneous manifold, will be denoted by $\cM_x$. 
Let us start by fixing some extra notation.

\begin{notation}\label{not:again}
We will denote by $G$ the identity component of $\Aut(\cM_x)$, so that every $\cM_x$ is isomorphic to the quotient of $G$ by a parabolic subgroup $P$. Then a Borel subgroup $B$ of $G$ contained in $P$ defines a $G/B$-bundle over $X$, that we denote by $\tl{q}:\tl{\cU}\to X$, that admits a contraction $\pi:\tl{\cU}\to \cU$. Note that $\pi$ is also a flag bundle over $\cU$, whose fiber we write as $G'/B'$ (as in Section \ref{sec:notn}, $G'$ is the semisimple group obtained by subsequently quotienting $P$ by its unipotent subgroup and then by the center, and $B'$ is the image of $B$ in this quotient).

We will denote by $k$ the rank of $G$, and set $D=\{1,\dots,k\}$. Every index $i \in D$ corresponds to a minimal parabolic subgroup $P_i\subset G$ containing $B$, that provides an elementary contraction ${\pi}_i:\tl{\cU}\to {\cU}_i$, which is a
a $\P^1$-bundle whose relative canonical classes we denote by $K_i$. We set $J:=\{i\in D\,\,|\,\,P_i\subset P\}$ and $I:=D\setminus J$, so that the contraction $\pi:\tl{\cU}\to \cU$ factors via ${\pi}_i$, for every $i\in J$. 
\end{notation}

Let $f:\P^1\to X$ be the normalization of any curve $\Gamma$ of the family $\cM$  and consider the pull-back $f^*\cU$; since $\Gamma$ is an element of the family  there is a natural section $s:\P^1\to s(\P^1) \subset f^*\cU$.  Denote by $ \tl s: \P^1\to s^*{f}^*\tl{\cU}$ a minimal section of the $G'/B'$-bundle $s^*{f}^*\tl{\cU}$ over $\P^1$ and  by $\tl{\Gamma}$ its image.  We have then the following commutative diagram:
$$
\xymatrix@=35pt{
& s^*{f}^*\tl{\cU}\pb \ar@{^{(}->}[]+<0ex,-2.5ex>;[d]\ar[r]_{\pi} 
\ar@{^{(}->}[]+<0ex,-2.5ex>;[d]
& \,\,\,\P^1 \ar@{=}[rd]  \ar@{^{(}->}[]+<0ex,-2.5ex>;[d]^s \ar@/_{2mm}/[]+<-2ex,+2.2ex>;[l]+<+0.7ex,+2.2ex>_{\tl s} &\\
& f^*\tl{\cU}\pb\ar[d]^{\tl f}\ar[r]_{\pi}& f^*\cU\pb\ar[d]^{f}\ar[r]_q& \P^1\ar[d]^{f}\\
\cM & \tl{\cU}\ar[l]^{p\circ\pi}\ar[r]_{\pi}&\cU 
\ar@/^{3mm}/[]+<-2ex,-2ex>;[ll]+<+2ex,-2ex>_{p} \ar[r]_q&X }
$$

The curve  $s(\P^1)$ is contracted by  the  composition map $f^*\cU\to \cU \to \cM$ and, moreover, it cannot deform in $f^*\cU$, otherwise its deformations would give a positive dimensional subvariety of $\cM$ determining the same curve in $X$. Hence, identifying $s^*{f}^*\tl{\cU}$ with its image in ${f}^*\tl{\cU}$, we may say that $\tl\Gamma$ is a minimal section of both the $G/B$-bundle $f^*\tl{\cU}$ and the $G'/B'$-bundle $s^*{f}^*\tl{\cU}$. By Proposition \ref{prop:minismin2} this minimal section can be regarded as a minimal fundamental section for each of the bundles, and Theorem \ref{thm:intergrothen} tells us that the two bundles are determined by two tagged Dynkin diagrams supported on the Dynkin diagrams $\cD$ and $\cD'$ of $G$ and $G'$, respectively. 

Moreover, $\cD'$ corresponds to the Dynkin subdiagram of $\cD$ obtained by eliminating from $\cD$ the nodes indexed by $I\subset D$ and, by the interpretation of the tags given in Proposition \ref{prop:grothunique}, the tags agree with the inclusion $\cD'\subset \cD$.

\begin{lemma}\label{lem:RHVMRT1}
With the same notation as above, the $G'/B'$-bundle $s^*{f}^*\tl{\cU}$ is trivial over $\P^1$. 
\end{lemma}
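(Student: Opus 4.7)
The plan is to apply Theorem \ref{thm:intergrothen} to the $G'/B'$-bundle $s^*f^*\tl{\cU}$: its triviality over $\P^1$ is equivalent to the vanishing of all its tags, and by the identification recorded immediately before the statement these coincide with the integers $\tilde d_j:=K_{\pi_j}\cdot\tl{\Gamma}$ for $j\in J$, where $(\tilde d_i)_{i\in D}$ denotes the tag of $f^*\tl{\cU}$ as a $G/B$-bundle on $\P^1$. So the task reduces to showing $\tilde d_j=0$ for every $j\in J$.

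My first step would be to pin down the splitting type of $\bar s^*T_q$ on $\P^1$. The rigidity of $s$ in $f^*\cU$ noted in the excerpt gives $H^0(\P^1,\bar s^*T_q)=0$, so every summand has degree $\leq -1$. A short computation using $K_q=K_{\cU}-q^*K_X$ together with the fact that $\bar s(\P^1)=p^{-1}([\Gamma])$ is a fiber of $p$ (whence $K_{\cU}\cdot\bar s(\P^1)=-2$) yields
$$
\deg\bar s^*T_q \;=\; -K_q\cdot\bar s(\P^1) \;=\; 2+K_X\cdot\Gamma,
$$
while standard deformation theory for rational curves, combined with the smoothness of $q$ and the nefness of $f^*T_X$ (which gives $h^1(f^*T_X)=0$), forces $\dim\cM_x=-K_X\cdot\Gamma-2$. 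Matching rank and degree then forces $\bar s^*T_q\cong\cO_{\P^1}(-1)^{\oplus\dim\cM_x}$.

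Next I would invoke the root-space decomposition of the relative tangent bundle along the minimal section $\tl{\Gamma}$ provided by the Lie bracket construction of \cite[Construction 1]{MOSW}:
$$
\pi^*T_q|_{\tl{\Gamma}} \;\cong\; \bigoplus_{\beta\in\Phi^+\setminus\Phi^+(I)} \cO_{\P^1}\bigl(-\langle\beta,\tilde d\rangle\bigr),
$$
with $\langle\beta,\tilde d\rangle:=\sum_{i\in D}\beta_i\tilde d_i$ when $\beta=\sum_i\beta_i\alpha_i$. Since $\pi|_{\tl{\Gamma}}$ identifies $\tl{\Gamma}$ with $\bar s(\P^1)$, comparing this splitting with the one from the previous step forces $\langle\beta,\tilde d\rangle=1$ for every $\beta\in\Phi^+\setminus\Phi^+(I)$; specializing to $\beta=\alpha_i$ with $i\in I$ gives $\tilde d_i=1$ for every $i\in I$.

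To conclude $\tilde d_j=0$ for every $j\in J$, for each such $j$ I would exhibit a positive root $\beta\in\Phi^+\setminus\Phi^+(I)$ involving both $\alpha_j$ and some $\alpha_i$ with $i\in I$, coming from a path in the Dynkin diagram of the simple factor of $G$ containing $\alpha_j$; such a factor must contain some node of $I$ because the action of $G$ on $\cM_x=G/P$ is faithful. The identity $1=\langle\beta,\tilde d\rangle=\tilde d_j+\tilde d_{j_1}+\cdots+\tilde d_i$, together with $\tilde d_i=1$ and the nonnegativity of the other terms, then forces $\tilde d_j=0$. The most delicate point I foresee is the clean verification of the dimension/degree equality in the first step, which ties together the hypotheses on the family and the Riemann--Roch computation on $\P^1$; once this is in place, the remaining root-system bookkeeping in the second and third steps is essentially formal.
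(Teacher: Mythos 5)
Your proposal takes a genuinely different route from the paper's. You aim to pin down the splitting type $s^*T_q\cong\cO_{\P^1}(-1)^{\oplus\dim\cM_x}$ exactly and then read off the tags from the root-space decomposition, whereas the paper never determines $s^*T_q$ itself: it works only with the aggregate degree $\pi^*K_q\cdot\tl\Gamma=\sum_t(b_t-c_t)d_t$, proves $d_i\geq 1$ for $i\in I$ by a geometric argument on the $i$-th pivotal $\P^1$-bundle (if $d_i=0$ that pivotal bundle is $\P^1\times\P^1$, embeds in $f^*\cU$ via $\pi$, and its rulings would give a positive-dimensional family in $\cM$ parametrizing $\Gamma$), and then closes the argument with the root-counting inequality $\sum_{i\in I}b_i\geq\sharp(\Phi^+\setminus\Phi^+(I))$. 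Your step 3 (exhibiting, for each $j\in J$, a positive root supported on a Dynkin-path from $\alpha_j$ to a node in $I$) is also a different and clean way to conclude once the splitting type is known.

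There is, however, a genuine gap in your step 1. You assert that rigidity of the section $s$ in $f^*\cU$ gives $H^0(\P^1,s^*T_q)=0$, but rigidity alone does not imply vanishing of the tangent space to the Hom scheme of sections: the scheme could be a non-reduced point, and here $\chi(s^*T_q)=0$ so the lower bound $\dim\geq h^0-h^1$ gives no information if $h^1\neq 0$. What is available a priori from the minimal fundamental section is only that all summands of $s^*T_q$ have degree $\leq 0$ (since $\tilde d_t\geq 0$), which still allows summands $\cO$ together with summands $\cO(-a)$, $a\geq 2$, in which case $h^1\neq 0$ and the Hom scheme need not be smooth at $s$. So the implication ``rigid $\Rightarrow h^0=0$'' is exactly the point that needs an argument, and as written you have not supplied one. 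The paper sidesteps this by proving $d_i\geq 1$ for $i\in I$ directly via the pivotal-$\P^1$-bundle argument (where the positive-dimensional family of sections is produced \emph{explicitly}, so no obstruction-theory issue arises); from $\tilde d_i\geq 1$ for $i\in I$ one gets $\langle\beta,\tilde d\rangle\geq 1$ for every $\beta\in\Phi^+\setminus\Phi^+(I)$, which then forces, together with your degree count, the splitting $s^*T_q\cong\cO(-1)^{\oplus\dim\cM_x}$. If you replace your appeal to rigidity by that geometric argument (or by an equivalent one), the rest of your proposal goes through and gives a valid alternative proof.
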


\begin{proof}
Recall that the $t$-th tag of $G/B$ is given by $d_t:=\tl{\Gamma}\cdot K_t$.
In view of Theorem \ref{thm:intergrothen} it is enough to show that $d_t=0$ for every $t \not \in  I$.
By Lemma \ref{lem:relcan2} applied to the $G/B$-bundle $f^*\tl \cU \to \P^1$:
\begin{equation}\label{eq:relcans}
\pi^*K_{q}=\sum_{t=1}^k(b_t-c_t)K_{\pi_{t}}.
\end{equation}
where the integers $b_t$, $c_t$ are given by 
\begin{equation}\label{eq:relcans2}
\sum_{\beta\in\Phi^+}\beta=\sum_{i\in D}b_i\alpha_i,\quad \sum_{\beta\in\Phi^+(I)}\beta=\sum_{j\in D}c_j\alpha_j=\sum_{j\in D\setminus I}c_j\alpha_j.
\end{equation}
Note that, by definition, we have $b_t \ge c_t$ for every $t \in D$.
By Equation (\ref{eq:relcans}), we may then write, for any $x\in X$
\begin{equation}\label{eq:relcans3}
\dim(\cM_x)=\pi^*K_q\cdot \tl{\Gamma}=\sum_{j \not \in I}(b_j-c_j)d_j+\sum_{i \in I}b_id_i,
\end{equation}
where the first equality follows from the fact that $\dim(\cM_x) =-K_X \cdot \Gamma -2= K_q \cdot f (s(\P^1))$ and the projection formula.

Since the minimality of $\tl{\Gamma}$ implies that $d_t\geq 0$ for all $t\in D$, we may conclude by showing the following:
\begin{itemize}
\item[(\ref{lem:RHVMRT1}.1)] $b_j-c_j>0$ for all $j \not \in I$.
\item[(\ref{lem:RHVMRT1}.2)] $\sum_{i \in I}b_id_i\geq\dim(\cM_x)$.
\end{itemize}

In order to prove (\ref{lem:RHVMRT1}.1), we notice that, having $b_j=c_j$ for some $j \not \in I$, is equivalent to say that the only positive roots having nonzero coefficient with respect to $\alpha_j$ are the elements of $\Phi^+(I)$. 

Since $G$ is the identity component of $\Aut(G/P)$, it follows that  every connected component of the Dynkin diagram $\cD$ of $G$ must contain a node corresponding to an index $i \in I$; in particular, the connected component $\cD'$ of $\cD$ containing the node corresponding to $\alpha_j$ contains a node corresponding to an index $i \in  I$. But then the longest root $\beta$ of the root subsystem  determined by $\cD'$ is a root of $\cD$, whose coordinates with respect to $\alpha_j$ and $\alpha_i$ are different from zero, hence $\beta\in\Phi^+\setminus\Phi^+(I)$, which contradicts that $b_j=c_j$.  

In order to show (\ref{lem:RHVMRT1}.2), we claim first that $d_i>0$ for every $i \in  I$. In fact, if  $d_i=0$ for some $ i\in I$, the $i$-th pivotal $\P^1$-bundle $\tl \cU_i$ which maps isomorphically into $f^*\cU$ via $\pi$, would be isomorphic to $\P^1\times\P^1$. But a section of $\tl \cU_i$ maps to $s(\P^1)$, which is contracted by $p \circ f$; hence the image of $\pi(\tl \cU_i)$ into $\cM$ would be a curve, whose points would parametrize the same curve in $X$, a contradiction.  

At this point, in order to conclude it is enough to check that 
$$\sum_{i \in I}b_i\geq \sharp(\Phi^+\setminus\Phi^+(I))=\dim(\cM_x).$$
The last equality follows from the hypothesis $\cM_x=G/P$. Since every root in $\Phi^+\setminus\Phi^+(I)$ increases at least in a unity at least an integer $b_i$, the first inequality follows, and the lemma is proved. 
\end{proof}

\begin{corollary}\label{cor:RHVMRT1}
Let $X$ be as in Theorem \ref{thm:RHVMRT}. Then, with the same notation as above, the morphism $p\circ\pi:\tl{\cU}\to\cM$ factors via a smooth $\P^1$-bundle $\tl{p}:\tl{\cU}\to \tl{\cM}$, for a smooth projective variety $\tl \cM$.
\end{corollary}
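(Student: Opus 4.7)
The plan is to use Lemma~\ref{lem:RHVMRT1} to realize $\tl{\cM}$ as the parameter space of ``horizontal lifts'' of the fibers of $p$ in $\tl{\cU}$. Lemma~\ref{lem:RHVMRT1} identifies each fiber of $p\circ\pi:\tl{\cU}\to\cM$ with $\P^1\times G'/B'$, where the $\P^1$-factor is the corresponding fiber of $p$; the horizontal curves $\P^1\times\{\xi\}$, $\xi\in G'/B'$, yield a $G'/B'$-family of rational curves in each fiber of $p\circ\pi$, all of the same numerical class $\ell$ in $N_1(\tl{\cU})$ (determined by $K_{\pi_t}\cdot\ell=0$ for $t\in J$, $K_{\pi_t}\cdot\ell=d_t>0$ for $t\in I$, and $\tl{q}_\ast\ell=[\Gamma]\in N_1(X)$).

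I will take $\tl{\cM}$ to be the irreducible component of the relative morphism scheme $\Mor_\cM(\cU,\tl{\cU})$ parametrizing sections of $\pi$ over fibers of $p$, containing the horizontal (constant) sections. The fiber of $\tl{\cM}\to\cM$ over $[\Gamma]\in\cM$ is then the set of constant sections of the trivial bundle $\P^1\times G'/B'\to\P^1$, namely $G'/B'$. Smoothness of $\tl{\cM}\to\cM$ of relative dimension $\dim G'/B'$ follows from deformation theory: the pullback $\sigma^*T_\pi$ along a constant section is a trivial bundle on $\P^1$ of rank $\dim G'/B'$, so $H^1(\P^1,\sigma^*T_\pi)=0$ while $H^0(\P^1,\sigma^*T_\pi)=T_\xi(G'/B')$. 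Hence $\tl{\cM}\to\cM$ is a smooth projective $G'/B'$-bundle.

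The universal section yields an evaluation morphism $\cU\times_\cM\tl{\cM}\to\tl{\cU}$, $(u,\sigma)\mapsto\sigma(u)$, which I claim is bijective: any $x\in\tl{\cU}$ sits in a unique fiber $\P^1\times G'/B'$ of $p\circ\pi$, where it decomposes uniquely as $(u,\xi)$, and then lies on the unique constant section $\mathrm{const}_\xi$. Being a proper bijective morphism between smooth projective varieties of equal dimension, the evaluation is an isomorphism. The second projection then gives $\tl{p}:\tl{\cU}\cong\cU\times_\cM\tl{\cM}\to\tl{\cM}$, which as the pullback of the $\P^1$-bundle $p$ along $\tl{\cM}\to\cM$ is itself a smooth $\P^1$-bundle; by construction $p\circ\pi$ coincides with $\tl{p}$ composed with the projection $\tl{\cM}\to\cM$.

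The main obstacle I expect is the rigorous identification of $\tl{\cM}$ as a smooth projective $G'/B'$-bundle over $\cM$: Lemma~\ref{lem:RHVMRT1} supplies the fibers, but verifying smoothness globally requires $H^1(\P^1,\sigma^*T_\pi)=0$ at every constant section (which is immediate from the triviality of $\sigma^*T_\pi$), and properness needs boundedness of the chosen component of the relative $\Mor$-scheme, which is ensured by fixing the numerical class $\ell$.
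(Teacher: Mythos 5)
Your proof takes a genuinely different route from the paper's. The paper's argument is short and cocycle-theoretic: since $\cM$ is simply connected, Lemma~\ref{lem:RHVMRT1} together with the setup of Section~\ref{sec:notn} shows that $p\circ\pi$ is a $(\P^1\times G'/B')$-bundle classified by a cocycle $\theta'\in H^1(\cM,\PGL(2)\times G')$ (the identity component of $\Aut(\P^1\times G'/B')$ is a product), and projecting $\theta'$ to $H^1(\cM,G')$ produces the $G'/B'$-bundle $\tl{\cM}\to\cM$; the corresponding splitting $\tl{\cU}\cong\cU\times_\cM\tl{\cM}$ then gives $\tl p$ as the second projection, automatically a smooth $\P^1$-bundle. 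You instead construct $\tl\cM$ directly as a moduli space of horizontal minimal sections and verify that it does the job. Both arguments hinge on Lemma~\ref{lem:RHVMRT1}; yours trades the cocycle machinery for deformation theory of sections, which is a reasonable alternative and makes the geometric content of the splitting explicit.

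There is, however, a gap in the properness/projectivity of $\tl\cM$. Fixing the numerical class $\ell$ gives boundedness, i.e.\ finite type, but not completeness: a priori a flat one-parameter family of horizontal sections could degenerate to a cycle with vertical components, leaving the chosen component of the relative $\Mor$-scheme. Your final sentence ``Being a proper bijective morphism between smooth projective varieties\dots'' therefore presupposes what still needs to be shown, namely that $\cU\times_\cM\tl\cM$ is projective. Two clean fixes: (i) show directly that no such degeneration exists, because adding a $\pi$-vertical curve $C$ strictly decreases the $K_{\pi_t}$-degree (for $t\in I$) while minimal sections already achieve the extremal value; or (ii) drop the assumption of projectivity and instead invoke Zariski's Main Theorem: the evaluation morphism is injective, separated, of finite type, and birational onto the normal variety $\tl\cU$, hence an open immersion, and surjectivity then makes it an isomorphism; properness of $\tl\cM$ follows a posteriori as the image of $\tl\cU\cong\cU\times_\cM\tl\cM$ under the (proper, being a pullback of $p$) second projection. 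With either repair the argument goes through.
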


\begin{proof}
Write, as in Notation \ref{not:again}, any fiber $F'$ of $\pi:\tl \cU \to \cU$ as a quotient  $G'/B'$. By Lemma \ref{lem:RHVMRT1}, the fibers of the smooth morphism $p\circ\pi$ are isomorphic to the rational homogeneous manifold $G'/B'\times \P^1$. Moreover, since $\cM$ is rationally connected (because $\cU$ is rationally connected), hence simply connected, $p\circ\pi$ is defined by a cocycle $\theta'\in H^1(\cM,G'\times \Aut(\P^1))$. The image  of this cocycle via the natural map to $H^1(\cM,G')$ provides a $G'/B'$-bundle $\tl{\pi}:\tl{\cM}\to \cM$, whose pull-back to $\cU$ via $p$ is precisely $\pi$. 
\end{proof}

We may now finish the proof of the main result of this Section.

\begin{proof}[Proof of Theorem \ref{thm:RHVMRT}]
Note that, by construction, $\rho_{\tl{\cU}}=\rho_{G/B}+\rho_X=\rho_{G/B}+1$. On one hand, every minimal parabolic subgroup $P_i\supset B$, $i=1,\dots,k$ provides a smooth $\P^1$-bundle $\tl{\cU}\to \cU_i$. Moreover, the classes $\Gamma_i$ of the curves contracted by them are independent in $N_1(\tl{\cU})$. On the other hand, the class $\Gamma$ of a fiber of the morphism $\tl{p}$ provided by Corollary \ref{cor:RHVMRT1} does not lie in the hyperplane of $N_1(\tl{\cU})$ generated by the $\Gamma_i$'s, hence $\{\Gamma,\Gamma_1,\dots,\Gamma_k\}$ is a basis of $N_1(\tl{\cU})$. We may now apply Theorem \ref{thm:newOSWW}, which is a refined version of the Main Theorem in \cite{OSWW}, and claim that $\tl{\cU}$ is rational homogeneous. Then $X$, which is the image of a contraction of $\tl{\cU}$, is rational homogeneous, too. 
\end{proof}

\begin{remark}\label{rem:twofibrations}
Note that, along the proof of Theorem \ref{thm:RHVMRT} and its preliminary lemmata, we have not really used the assumption on $\cM$ being a component of $\rat^n(X)$. In fact we are only using that $p:\cU\to \cM$ is a $\P^1$-bundle with an evaluation morphism $q$ onto $X$, for which the natural map $\cM\to \rat^n(X)$ is surjective onto an irreducible component $\cM'$ and finite. In fact, it suffices to assume that $\cM\to \cM'$ is surjective and {\it generically} finite, since the existence of a curve contracted by $\cM\to \cM'$ easily implies the existence of a curve in a fiber of the $G/P$-bundle $q:\cU\to X$ contracted also by the natural morphism from $\cU$ to the universal family $\cU'$ over $\cM'$. Since this curve is free in $\cU$, it follows that $\dim(\cU)>\dim(\cU')$ and hence $\dim(\cM)>\dim(\cM')$, a contradiction. Hence, this observation allows us to state the following, slightly more general, result. 
\end{remark}

\begin{corollary}\label{cor:twofams}
Let $\cU$ be a smooth complex projective algebraic variety, and $p:\cU\to \cM$, $q:\cU\to X$ be two smooth fiber bundles, with fibers isomorphic to $\P^1$ and to a rational homogeneous manifold $G/P$, respectively. If moreover $b_2(X)=1$, and the natural map $\cM\to\rat^n(X)$ is a local isomorphism at a general point, then $X$, $\cM$ and $\cU$ are rational homogeneous.
\end{corollary}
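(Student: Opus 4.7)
The plan is to reduce to Theorem \ref{thm:RHVMRT} via the observation in Remark \ref{rem:twofibrations}. First I would show that $X$ is Fano of Picard number one. Being smooth projective with $b_2(X)=1$, Hodge theory gives $\rho(X)\leq h^{1,1}(X)\leq b_2(X)=1$, and $\rho(X)\geq 1$, so $\rho(X)=1$. Every fiber of $p$ is a smooth $\P^1$ whose image $\Gamma$ under $q$ is a rational curve (non-constant because $\cM\to\rat^n(X)$ is a local isomorphism at a general point); these fibers form a smooth covering family, so the $\Gamma$'s are free rational curves in $X$, whence $-K_X\cdot\Gamma>0$ and, combined with $\rho(X)=1$, this gives $-K_X$ ample and $X$ Fano.

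Second, let $\cM'\subset\rat^n(X)$ denote the irreducible component containing the image of $\cM$. The local isomorphism hypothesis makes $\cM\to\cM'$ dominant and generically finite. Following Remark \ref{rem:twofibrations}, I would rule out contracted curves: any such curve in $\cM$ would lift via the $\P^1$-bundle $p$ to a surface in $\cU$ that is collapsed by the natural morphism $\cU\to\cU'$, where $\cU'$ is the universal family over $\cM'$, forcing $\dim(\cU)>\dim(\cU')$ and hence $\dim(\cM)>\dim(\cM')$, contradicting generic finiteness. Therefore $\cM\to\cM'$ is finite and surjective, which is the only property of $\cM$ needed in the proof of Theorem \ref{thm:RHVMRT}.

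Finally, I would run that proof verbatim. The finiteness of $\cM\to\cM'$ ensures that for any $\Gamma\in\cM$ with normalization $f:\P^1\to X$ the natural section $s(\P^1)\subset f^*\cU$ does not deform, so the minimal fundamental section $\tl\Gamma$ of the $G'/B'$-bundle $s^*f^*\tl{\cU}$ is also minimal in the $G/B$-bundle $f^*\tl{\cU}$. Lemma \ref{lem:RHVMRT1} and Corollary \ref{cor:RHVMRT1} then produce the smooth $\P^1$-bundle $\tl{p}:\tl{\cU}\to\tl{\cM}$, and together with the $k$ pivotal $\P^1$-bundles $\pi_i$ they give $k+1$ classes that span $N_1(\tl{\cU})$. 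Theorem \ref{thm:newOSWW} then yields the rational homogeneity of $\tl{\cU}$, and taking contractions gives that of $X$, $\cU$, $\tl{\cM}$ and $\cM$. The main obstacle is the second step---verifying that the arguments of Section \ref{sec:appCP} only require the finiteness of $\cM\to\cM'$ rather than the completeness of the family---but this is precisely what Remark \ref{rem:twofibrations} records.
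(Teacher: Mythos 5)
Your proposal is correct and follows essentially the same route as the paper, which derives Corollary \ref{cor:twofams} directly from the observation in Remark \ref{rem:twofibrations} that the proof of Theorem \ref{thm:RHVMRT} only uses that $\cM\to\cM'$ is surjective and generically finite (to guarantee that the natural section $s(\P^1)\subset f^*\cU$ does not deform). You correctly supply the reduction of the hypothesis $b_2(X)=1$ to $\rho(X)=1$ and verify $X$ is Fano, and you reproduce the dimension-count argument of the remark ruling out curves contracted by $\cM\to\cM'$; the only slight imprecision is that the surface $p^{-1}(C)$ is collapsed by $\cU\to\cU'$ onto a curve rather than a point, and what forces $\dim\cU>\dim\cU'$ is that the resulting contracted curves sitting inside fibers of $q$ are free and move, but this is exactly the content of the remark and does not affect the validity of the argument.
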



\appendix

\section{Appendix: Characterization of 
flag manifolds revisited}\label{sec:OSWWrev}

The main result of \cite{OSWW} states that a Fano manifold whose elementary contractions are $\P^1$-bundles is necessarily rational homogeneous. In this Appendix we will prove a slightly more general form of that statement, namely:  

\begin{theorem}\label{thm:newOSWW}
Let $X$ be a smooth projective variety of Picard number $n$, satisfying that there exist $\Gamma_i\in N_1(X)$, $i=1,\dots,n$, independent $K_X$-negative classes generating $n$ extremal rays, whose associated elementary contractions $\pi_i:X\to X_i$ are smooth $\P^1$-bundles. Then $X$ is isomorphic to a flag manifold $G/B$, for some semisimple group $G$.
\end{theorem}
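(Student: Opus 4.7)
The plan is to revisit the proof of \cite[Main Theorem]{OSWW} and check that its argument applies under the weaker hypothesis of Theorem \ref{thm:newOSWW}. In \cite{OSWW}, the Fano assumption on $X$ enters only insofar as it guarantees two structural properties: (i) every elementary contraction of $X$ is a smooth $\P^1$-bundle, providing $n=\rho(X)$ such contractions whose fiber classes span $N_1(X)$; and (ii) $X$ is rationally connected, so one may apply Mori-theoretic tools and deformation-theoretic arguments. Under the hypotheses of Theorem \ref{thm:newOSWW}, property (i) is given directly by assumption. Property (ii) has to be recovered without assuming $X$ Fano: the $K_X$-negativity of the classes $\Gamma_i$ together with the smoothness of the $\pi_i$'s ensures that the fibers of each $\pi_i$ are free rational curves; iterating chains of fibers of the various $\pi_i$'s reaches a dense open subset from any given point, and since the classes $\Gamma_1,\dots,\Gamma_n$ span $N_1(X)$, one concludes rational (chain) connectedness of $X$ by the standard Koll\'ar--Miyaoka--Mori theory.

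Once (i) and (ii) are available, the OSWW strategy proceeds by extracting a root datum from the combinatorics of the smooth $\P^1$-bundles: each $\pi_i$ yields a ``reflection'' $r_i$ on $N^1(X)$ associated with $\Gamma_i$; one shows that these reflections generate a finite group $W$, isomorphic to the Weyl group of a semisimple Lie algebra $\fg$, and then constructs a $G$-action on $X$ making $X\iso G/B$, for $G$ the simply connected semisimple group with Lie algebra $\fg$ and $B$ a Borel subgroup. The identification is carried out fiberwise along suitable chains of rational curves, using that a $G/B$-bundle over $\P^1$ is determined by its minimal sections and pivotal $\P^1$-bundles, a fact that we have axiomatised in Theorem \ref{thm:intergrothen}.

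The main step of the revised proof is then a line-by-line verification that every ingredient in \cite{OSWW}---the definition of the reflections $r_i$, the finiteness of $\sga r_1,\dots,r_n\sgc$, the construction of the $G$-action, and the final identification---depends only on the existence of the $n$ smooth $\P^1$-bundles $\pi_i$ with $K_X$-negative fiber classes generating $N_1(X)$, and not on the full Fano assumption. Since the construction of the rational chains, the study of their deformations and the assembly of the reductive structure already use only the $n$ given contractions, this verification is essentially bookkeeping: at each step one checks that ``extremal contraction'' may be replaced by ``one of the $\pi_i$'s'' without loss.

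The main obstacle I anticipate is precisely the recovery of rational connectedness and the associated deformation-theoretic control, since this is the feature of Fano manifolds most heavily used in \cite{OSWW} to guarantee the uniqueness and rigidity of the $G$-action. Here the key point is that the $\P^1$-bundle structures together with the $K_X$-negativity of the $\Gamma_i$'s give families of free rational curves that cover $X$, so that the bend-and-break arguments and the uniqueness of minimal sections from Section \ref{sec:groth} (notably Proposition \ref{prop:minismin2}) remain available. Once this is in place, the remainder of the OSWW argument transfers without modification, and Theorem \ref{thm:newOSWW} follows.
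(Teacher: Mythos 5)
Your proposal correctly observes that rational chain connectedness of $X$ can be recovered from the $\P^1$-bundles $\pi_i$ alone, and that, once the reflections $r_i$ are available, one can build a root system and a homogeneous model $G/B$. That part matches the paper's Appendix \ref{sec:OSWWrev} (see Remark \ref{rem:dimflags} and Corollary \ref{cor:newOSWW1}). However, you have misidentified the true obstruction to transferring the argument of \cite{OSWW}.

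The genuine gap is that you implicitly assume the $n$ given extremal rays exhaust the Mori cone, i.e.\ that $\cNE{X}$ is the simplicial cone spanned by $\Gamma_1,\dots,\Gamma_n$, or equivalently that $X$ is Fano. In \cite{OSWW} this is part of the hypothesis; here it is \emph{not}: the theorem only asks for $n$ independent $K_X$-negative extremal classes whose contractions are smooth $\P^1$-bundles, and there could a priori be further extremal rays, so $-K_X$ need not be ample, and the dual cone $N$ of the $\Gamma_i$'s need not equal $\Nef(X)$. Several of the cohomological steps in \cite{OSWW}---in particular Lemma 3.14 and Proposition 3.17, which feed into the Bott--Samelson dimension formula---rely on ampleness of $-K_X$ (through vanishing theorems) and on $N=\Nef(X)$, so they do \emph{not} ``transfer without modification.'' This is exactly what the paper supplies: a weaker vanishing statement (Lemma \ref{lem:j1}), a careful comparison of Euler characteristics and $h^0$ between $X$ and its homogeneous model along Bott--Samelson varieties using only nefness hypotheses (Proposition \ref{prop:Chi}), and then the dimension formula for Bott--Samelson images (Proposition \ref{prop:stein2}); these are combined via the chain-connected-fibration criterion of \cite[Prop.~1]{BCD} to show in Proposition \ref{prop:simpl} that $\cNE{X}$ \emph{is} simplicial, hence $X$ is Fano, at which point \cite[Theorem 1.2]{OSWW} can finally be invoked.

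In short: the ``bookkeeping'' you describe would be circular, since proving the hypotheses of \cite{OSWW} are actually met (Fano, with every elementary contraction among the given $\pi_i$) is the content of the new proof, not a consequence of it. Your stated ``main obstacle'' (rational connectedness) is in fact the easy part; the missing idea is the cohomological comparison with the homogeneous model that forces the Mori cone to collapse onto the given $n$ rays.
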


First of all we will show that, as in the proof of \cite[Theorem 1.2]{OSWW},
we may use the $\P^1$-bundles to construct a set of reflections in $N^1(X)$, generating a finite group that turns out to be the Weyl group of a semisimple Lie algebra $\fg$. We then call {\it homogeneous model of $X$}  the flag manifold $G/B$ associated to the corresponding Lie group $G$. One may then identify appropriately the relative anticanonical divisors $-K_i$ of the $\P^1$-bundles of $X$ with the relative anticanonical divisors $-\overline{K}_i$ of the homogeneous model $G/B$, via a linear isomorphism $\psi:N^1(X)\to N^1(G/B)$ preserving the cohomology of line bundles. 

We will then use this result to prove that the cone of curves of $X$ is generated by the classes of the $\Gamma_i$'s.
This implies that $X$ is a Fano manifold, whose elementary contractions are $\P^1$-bundles,
and the result follows then from \cite[Theorem 1.2]{OSWW}.

\subsection{The homogeneous model of $X$}

Following \cite{OSWW}, we consider $W$ to be the subgroup of $\GL(N^1(X))$ generated by the involutions of $N^1(X)$ defined by $r_i(L):=L+(L\cdot\Gamma_i)K_i$, where,  $K_i$ denotes the relative canonical bundle of $\pi_i$. Moreover, for every $i$ we consider the affine involution of $N^1(X)$ defined as the translation of $r_i$ by $K_X/2$, that is $r_i'(L):=r_i(L-K_X/2)+K_X/2$ (note that $K_i\cdot \Gamma_i=K_X\cdot\Gamma_i=-2$, for all $i$, so we may also write $r_i'(L):=L+(L\cdot\Gamma_i+1)K_i$). Let us denote by $W'$ the group generated by them, which is the translation by $K_X/2$ of the group $W$. Then the arguments provided in \cite[Section 2]{OSWW} work in our setting, and we may state the following:

\begin{lemma}\label{lem:newOSWW1}
Under the assumptions of Theorem \ref{thm:newOSWW},  
\begin{itemize}
\item[(a)]  $H^j(X,L)=H^{j+\sgn(L\cdot\Gamma_i+1)}(X,r_i'(L))$ for every $i=1,\dots, n$, $j\in \Z$.
\item[(b)] $\Chi(X,L)=\pm\Chi(X,w'(L))$, for every $L\in\Pic(X)$ and for every $w'\in W'$. 
\item[(c)] The group $W$ is finite. 
\end{itemize}
\end{lemma}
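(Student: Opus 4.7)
The plan is to reproduce the arguments of \cite[Section 2]{OSWW}, verifying that each of them uses only the $n$ smooth $\P^1$-bundle contractions $\pi_i:X\to X_i$ and not the assumption that every elementary contraction of $X$ is of this form.

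For part (a), one fixes an index $i$ and a line bundle $L$, and sets $d:=L\cdot\Gamma_i$, the degree of $L$ on a fiber of $\pi_i$. Since $K_i\cdot\Gamma_i=-2$, the fiber degree of $r_i'(L)$ equals $-d-2$. On the $\P^1$-bundle $\pi_i$, the direct image $R^s\pi_{i*}L$ is concentrated in degree $s=0$ when $d\geq 0$, in degree $s=1$ when $d\leq -2$, and vanishes entirely when $d=-1$. Relative Serre duality along the fibers identifies $R^1\pi_{i*}L$ with a twist of $(\pi_{i*}(L^{-1}\otimes\omega_{\pi_i}))^\vee$ by a line bundle pulled back from $X_i$; since $r_i'(L)$ and $L^{-1}\otimes\omega_{\pi_i}$ have the same fiber degree, they differ by a pullback from $X_i$, and their pushforwards therefore agree up to a rank-one twist on $X_i$ that does not affect cohomology degrees. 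Combined with the Leray spectral sequence for $\pi_i$, this gives exactly the shift $j\mapsto j+\sgn(d+1)$ asserted, with the case $d=-1$ immediate because both sides vanish.

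Part (b) then follows by iteration. Writing $w'=r_{i_s}'\circ\cdots\circ r_{i_1}'$ and applying (a) at each step, the Euler characteristic picks up a sign $(-1)^{\sgn(L_{k-1}\cdot\Gamma_{i_k}+1)}$, where $L_{k-1}$ is the intermediate line bundle at the $k$-th step; the product of these signs gives the $\pm 1$ in the statement, and if some intermediate fiber degree equals $-1$ then both $\Chi(X,L)$ and $\Chi(X,w'(L))$ vanish by (a) and the identity holds trivially.

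Part (c) is the most delicate step. The strategy is to exploit the $W'$-invariance (up to sign) of $\Chi(X,\cdot)$ provided by (b). Expanding $\Chi(X,w'(L))^2=\Chi(X,L)^2$ around the natural fixed point $-K_X/2$ of the translation part of $W'$ and collecting homogeneous components produces a family of $W$-invariant polynomial functions on $N^1(X)\otimes\Q$. Extracting a nondegenerate quadratic piece and showing that, after a sign adjustment, it is positive definite, one concludes that $W$ is a discrete subgroup of a compact orthogonal group that stabilizes the lattice $\Pic(X)$, hence finite. The main obstacle here is producing a \emph{definite} invariant bilinear form out of the Hilbert polynomial; this is precisely the computational core of \cite[Section 2]{OSWW}, and it relies only on the linear independence of the $\Gamma_i$ and on the Mori-theoretic relations $K_i\cdot\Gamma_i=-2$ built into the hypotheses of Theorem \ref{thm:newOSWW}.
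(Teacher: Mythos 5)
The paper's own proof of this lemma is a one-line citation: ``Follow verbatim Lemma~2.3, Lemma~2.7, Corollary~2.8 and Proposition~2.9 in [OSWW].'' Your task, then, is to verify that those arguments really use only the $n$ independent smooth $\P^1$-bundle contractions and nothing else, and on that score your reconstruction has two problems.

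In part (a), the sentence ``their pushforwards therefore agree up to a rank-one twist on $X_i$ that does not affect cohomology degrees'' is false as a general principle --- twisting a sheaf on $X_i$ by a nontrivial line bundle certainly can change all its cohomology groups. What is actually needed is that the twist is \emph{exactly the right one}, so that $R^1\pi_{i*}(r_i'(L))\cong\pi_{i*}L$ on the nose. Writing $X=\P(\cE_i)$, $L=d\xi+\pi_i^*A$ with $\xi=\cO_{\P(\cE_i)}(1)$, one finds $r_i'(L)^\vee\otimes\omega_{\pi_i}=-r_i(L)$, and relative duality gives $R^1\pi_{i*}(r_i'(L))\cong(\pi_{i*}(-r_i(L)))^\vee=(\Sym^d\cE_i)^\vee\otimes A^{-1}\otimes(\det\cE_i)^{-d}$, which equals $\Sym^d\cE_i\otimes A=\pi_{i*}L$ only because of the rank-two identity $(\Sym^d\cE_i)^\vee\cong\Sym^d\cE_i\otimes(\det\cE_i)^{-d}$. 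So the conclusion is right, but it rests on a computation that your phrasing skips over, and the heuristic you invoke in its place is incorrect.

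For part (c), the route you sketch --- extract a $W$-invariant, positive-definite quadratic piece from the Taylor expansion of $\Chi_X^2$ at $-K_X/2$ and conclude by discreteness in a compact orthogonal group --- is not what [OSWW, Prop.~2.9] does, and it does not work as stated. Already for $X=(\P^1)^n$ with $n\geq 3$ one has $\Chi_X(K_X/2+(b_1,\dots,b_n))=\prod_ib_i$, so every homogeneous piece of $\Chi_X$ (or of $\Chi_X^2$) at the center of degree less than $n$ (resp.\ $2n$) vanishes identically; there is no ``nondegenerate quadratic piece'' to extract. The argument that actually goes through --- and that the present paper invokes again in the proof of Corollary~\ref{cor:newOSWW2} --- is a hyperplane count: $\Chi_X$ is a nonzero polynomial of degree $\leq\dim X$ (nonzero since $\Chi_X(\cO_X)=1$), and by (a) and (b) it vanishes on every hyperplane $w'(K_X/2+\Gamma_i^\perp)$, $w'\in W'$, $i=1,\dots,n$; a nonzero polynomial of bounded degree can vanish on only finitely many hyperplanes, so this $W$-invariant arrangement is finite, the associated set of roots is finite, and $W$, being a reflection group acting faithfully on a finite root system, is finite. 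You should replace your definite-form argument by this degree-counting one. Part (b), by contrast, is fine: it is just iteration of (a).
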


\begin{proof}
Follow verbatim Lemma 2.3, Lemma 2.7, Corollary 2.8 and Proposition 2.9 in \cite{OSWW}.
\end{proof}

\begin{corollary}\label{cor:newOSWW1}
With the same notation as above
\begin{itemize}
\item[(a)] $W$ is the Weyl group of a reduced root system $\Phi:=\{w(-K_i)|\,\,w\in W,\,\,i=1,\dots,n\}\subset N^1(X)$,
defining a semisimple Lie algebra $\fg$.
\item[(b)] The set $\Delta:=\{-K_i,\,\,i=1,\dots,n\}$ is a base of positive simple roots for $\Phi$, and the Cartan matrix of $\Phi$ with respect to $\Delta$ is the intersection matrix $(-K_i\cdot\Gamma_j)_{ij}$; in particular, this matrix is nonsingular. 
\item[(c)] Given the decomposition $\Phi=\Phi^+\cup \Phi^-$ determined by the base $\Delta$, the class $-K_X$ equals $\sum_{\beta\in\Phi^+}\beta$.
\end{itemize}
\end{corollary}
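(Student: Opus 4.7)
My plan is to recognize the reflections $r_i$ as the simple reflections of a crystallographic root system whose simple roots are the $-K_i$ and simple coroots the $\Gamma_i$, and then to match the relevant Lie-theoretic identities. The first step is to unravel the definition of $r_i$: a direct calculation gives
\[ r_i(-K_j)\;=\;-K_j\;-\;(-K_j\cdot\Gamma_i)(-K_i), \]
which matches the classical reflection formula $s_{\alpha_i}(\alpha_j)=\alpha_j-\langle\alpha_j,\alpha_i^\vee\rangle\alpha_i$ with $\alpha_i:=-K_i$ and $\alpha_i^\vee:=\Gamma_i$. This identifies the candidate Cartan matrix as $(-K_i\cdot\Gamma_j)_{ij}$; its diagonal entries equal $2$ since $K_i\cdot\Gamma_i=-2$ on each $\P^1$-bundle.

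For part (b), nonsingularity of this Cartan matrix is equivalent to linear independence of the $-K_i$ in $N^1(X)_\R$. To obtain this independence, I would average over the finite group $W$ (finite by Lemma \ref{lem:newOSWW1}(c)) to produce a $W$-invariant positive definite inner product $(\,,\,)$ on $N^1(X)_\R$. Each $r_i$ is then the orthogonal reflection in the root $-K_i$, so comparison with the formula above shows that the hyperplane $\{L:L\cdot\Gamma_i=0\}$ coincides with $\{L:(L,-K_i)=0\}$, i.e.\ the functional $L\mapsto L\cdot\Gamma_i$ is proportional to $L\mapsto(L,-K_i)$. Since the $\Gamma_i$ form a basis of $N_1(X)_\R$ by hypothesis and the intersection pairing is perfect, translating through this proportionality forces the $-K_i$ to be a basis of $N^1(X)_\R$.

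With (b) in hand, I would define $\Phi:=W\cdot\Delta\subset N^1(X)_\R$ to establish (a). By construction $\Phi$ is $W$-invariant, and for $\alpha=w(-K_i)$ the conjugate $wr_iw^{-1}\in W$ is precisely the reflection along $\alpha$, so $\Phi$ is stable under the reflections in its own elements; integrality of Cartan pairings on $\Phi$ is inherited from $\Delta$ by $W$-equivariance. The main delicate point I anticipate is reducedness: I would deduce it from the classification, since the Cartan matrix above, being attached to a finite essential reflection group with nonsingular integral entries and diagonal $2$'s, must appear in the finite-type $\mathrm{ADE}/\mathrm{BCFG}$ list, all of whose members are reduced. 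This identifies $\Phi$ as a reduced root system and yields the semisimple Lie algebra $\fg$.

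For part (c), I would show that $-K_X$ and $\Sigma:=\sum_{\beta\in\Phi^+}\beta$ pair equally with every $\Gamma_i$; by nondegeneracy of the intersection pairing and the fact that $\{\Gamma_i\}$ is a basis of $N_1(X)_\R$, this suffices. On one hand, the relation $K_X=K_i+\pi_i^*K_{X_i}$ (standard for a smooth morphism) together with $\pi_i^*K_{X_i}\cdot\Gamma_i=0$ and $K_i\cdot\Gamma_i=-2$ gives $-K_X\cdot\Gamma_i=2$. On the other, invoking the classical fact that $r_i$ permutes $\Phi^+\setminus\{\alpha_i\}$ and sends $\alpha_i$ to $-\alpha_i$, I obtain $r_i(\Sigma)=\Sigma-2\alpha_i$; comparing with the reflection formula $r_i(\Sigma)=\Sigma-(\Sigma\cdot\Gamma_i)\alpha_i$ forces $\Sigma\cdot\Gamma_i=2$, which concludes the argument.
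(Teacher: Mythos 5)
Your strategy for (a) and (b) — treating the $r_i$ directly as simple reflections of a crystallographic root system and then appealing to the classification — is genuinely different from the paper's, which simply delegates these parts to Corollary~2.10 and Proposition~2.13 of \cite{OSWW}. Your averaging argument correctly establishes linear independence of the $-K_i$ and hence nonsingularity of the intersection matrix, and your argument for (c) is essentially identical to the paper's.

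However, there is a real gap in (a) and (b). For $\Delta=\{-K_i\}$ to be a base of simple roots, and for your classification step (``must appear in the finite-type ADE/BCFG list'') to apply, you need the off-diagonal sign condition $-K_i\cdot\Gamma_j\leq 0$ for $i\neq j$. This does \emph{not} follow from the properties you invoke (finiteness and essentiality of $W$, linear independence of the $-K_i$, integrality of the pairings, diagonal entries equal to $2$). For instance, take $W=S_4$ acting on the sum-zero hyperplane of $\R^4$, generated by the transpositions $(12),(13),(14)$ with ``roots'' $e_1-e_2,\;e_1-e_3,\;e_1-e_4$ and matching coroots: the resulting matrix is
\[
\begin{pmatrix}2&1&1\\1&2&1\\1&1&2\end{pmatrix},
\]
which is integral, nonsingular, has diagonal $2$, and is attached to a finite essential reflection group of the right rank — yet it is not a Cartan matrix, and $\{e_1-e_2,e_1-e_3,e_1-e_4\}$ is not a simple system for $S_4$. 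So a set of $n$ reflections with linearly independent roots generating a finite essential rank-$n$ group need not yield a simple system, and the classification of generalized Cartan matrices cannot be invoked without first proving the sign condition. This is precisely the nontrivial input that the cited results in \cite{OSWW} supply (by cohomological arguments on the $\P^1$-bundles, not by abstract reflection-group theory). Until you prove $-K_i\cdot\Gamma_j\leq 0$ for $i\neq j$ from the geometry, parts (a) and (b) do not follow, and — since (c) presupposes that $\Phi^+$ is well-defined via the base $\Delta$ — neither does (c).
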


\begin{proof}
Items (a) and (b) follow by Corollary 2.10 and Proposition 2.13 in \cite{OSWW}. For the third part, note that  $B=\sum_{\beta\in\Phi^+}\beta$ 
satisfies $B\cdot \Gamma_i=2$, for all $i$ (see \cite[10.2, Cor. of Lemma B]{Hum2}). Since $-K_X$ satisfies the same property, the equality follows by (b).
\end{proof}

We may then consider a semisimple Lie group $G$ with Lie algebra $\fg$, and the corresponding flag manifold $\overline{X}=G/B$, that we call the {\it homogeneous model of }$X$. We denote its corresponding $\P^1$-bundles, fibers, relative canonicals, reflections, etc, by $\overline{\pi}_i:\overline{X}\to \overline{X}_i$, $\overline{\Gamma}_i$, $\overline{K}_i$, $\overline{r}_i$, etc.
By Corollary \ref{cor:newOSWW1}, we may now define linear isomorphisms: 
$$\psi:N^1(X)\to N^1(\overline{X}),\qquad \psi':N_1(X)\to N_1(\overline{X}),$$ 
sending $K_i$ to $\overline{K}_i$ and $\Gamma_i$ to $\overline{\Gamma}_i$, and, therefore, 
satisfying that $\psi(L)\cdot\psi'(\Gamma)=L\cdot\Gamma$, for all $L\in N^1(X) $,  $\Gamma\in N_1(X)$, and sending $\Phi$ to $\overline\Phi$. Then:
\begin{lemma}\label{lem:preserveK}
With the same notation as above, the isomorphism $\psi$ sends $K_X$ to $K_{\overline{X}}$. Moreover, we have isomorphisms of groups $W\cong \overline{W}$, $W'\cong \overline{W}\,\!'$, compatible via $\psi$ with their actions on $N^1(X)$ and $N^1(\overline{X})$. 
\end{lemma}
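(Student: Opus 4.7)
My plan is to deduce both assertions from Corollary \ref{cor:newOSWW1}, which identifies $\Delta = \{-K_i\}$ as a base of simple roots for the root system $\Phi$, with Cartan matrix $(-K_i \cdot \Gamma_j)$, and gives the expression $-K_X = \sum_{\beta \in \Phi^+} \beta$. The analogous statements hold on the homogeneous model $\overline{X} = G/B$ by standard facts about flag manifolds (again as an instance of Corollary \ref{cor:newOSWW1}, applied to the $\P^1$-bundles $\overline{\pi}_i$), yielding $-K_{\overline{X}} = \sum_{\beta \in \overline{\Phi}^+} \beta$.

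First I will verify that $\psi$ maps $\Phi$ to $\overline{\Phi}$ and respects the partition into positive and negative roots. By construction, $\psi$ sends the simple root $-K_i$ to $-\overline{K}_i$, and since the Cartan matrices $(-K_i \cdot \Gamma_j)$ and $(-\overline{K}_i \cdot \overline{\Gamma}_j)$ agree (both encode the same abstract root system by Corollary \ref{cor:newOSWW1}(b)), $\psi$ intertwines the simple reflections $r_i$ and $\overline{r}_i$ acting on $N^1$. In particular, $\psi \circ r_i = \overline{r}_i \circ \psi$ for all $i$, so $\psi$ conjugates $W$ into $\overline{W}$ and induces a group isomorphism $W \cong \overline{W}$ compatible with the actions. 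Consequently $\psi$ maps the $W$-orbit $\Phi$ of $\Delta$ onto the $\overline{W}$-orbit $\overline{\Phi}$ of $\overline{\Delta}$, and, since positivity with respect to the base $\Delta$ is preserved by $\psi$, it sends $\Phi^+$ to $\overline{\Phi}^+$.

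Combining this with Corollary \ref{cor:newOSWW1}(c), I obtain
\[
\psi(K_X) \;=\; -\psi\!\left(\sum_{\beta\in\Phi^+}\beta\right) \;=\; -\sum_{\beta\in\overline{\Phi}^+}\beta \;=\; K_{\overline{X}},
\]
which is the first claim. For the compatibility of $W'$ with $\overline{W}\,\!'$, I recall that $r'_i$ is the affine conjugate $r'_i(L) = r_i(L - K_X/2) + K_X/2$, and similarly for $\overline{r}\,\!'_i$; since $\psi$ commutes with the linear part (already shown) and sends $K_X/2$ to $K_{\overline{X}}/2$ (just proven), it intertwines $r'_i$ with $\overline{r}\,\!'_i$, yielding the isomorphism $W' \cong \overline{W}\,\!'$ compatible with $\psi$.

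The only delicate point is the identification $\psi(\Phi^+) = \overline{\Phi}^+$: it is essential that the notion of ``positive root'' on both sides be defined with respect to the bases $\Delta$ and $\overline{\Delta}$, rather than, say, some polarization coming from geometry. Since $\psi$ is defined precisely by $-K_i \mapsto -\overline{K}_i$, the identification is automatic from the intertwining of simple reflections and the fact that $\Phi^+$ is characterized abstractly as the set of roots that are nonnegative integer combinations of $\Delta$; I do not expect any further obstacle.
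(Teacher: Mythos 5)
Your proposal is correct and follows essentially the same approach as the paper: identify $\psi(\Phi^+) = \overline{\Phi}^+$ using that $\psi$ sends $\Delta$ to $\overline{\Delta}$ and the Cartan matrices coincide, then invoke Corollary \ref{cor:newOSWW1}(c) for $\psi(K_X) = K_{\overline{X}}$, and define the group isomorphisms by $r_i \mapsto \overline{r}_i$, $r'_i \mapsto \overline{r}'_i$. The paper's two-sentence proof leaves all the intertwining computations implicit (and already asserts $\psi(\Phi)=\overline{\Phi}$ in the paragraph defining $\psi$), whereas you spell them out; the content is the same.
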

\begin{proof}
The first part follows from Corollary \ref{cor:newOSWW1} (c). For the second part, define the isomorphisms of groups by sending $r_i$ to $\overline{r}_i$ and $r'_i$ to $\overline{r}'_i$.
\end{proof}

One of our first goals will be to prove that the map $\psi$ preserves the cohomology of line bundles. In order to do that, we need first to prove that the dimension of $X$ equals the dimension of its homogeneous model.

\begin{remark}\label{rem:dimflags}
Note that the hypotheses in Theorem \ref{thm:newOSWW} suffice to claim that $X$ is rationally chain connected with respect to the curves $\Gamma_i$, $i=1,\dots,n$. In fact, we may consider the rationally connected fibration  with respect to the families of deformations of the $\Gamma_i$'s  (see \cite[IV.4.16]{kollar}), $\xymatrix{X  \ar@{-->}[r] & Z}$, which is a proper map from an open subset of $X$ onto its image. If $\dim(Z)>0$, then we may choose an effective divisor  on $Z$ and consider its strict transform $H$ in $X$. By construction, $H$ has intersection zero with every $\Gamma_i$, a contradiction. In particular we may claim that:
\begin{itemize}
\item $H^i(X,\cO_X)=0$ for $i> 0$ (cf \cite[IV.3.8]{kollar}), and that, by Serre duality,
\item $m:=\dim(X)$ is the only integer satisfying that $H^m(X,K_X)\neq 0$.
\end{itemize}
\end{remark}

\begin{corollary}\label{cor:dimflags}
With the same notation as above, $m=\dim(\overline{X})$, which is equal to the length of the longest element of $W$.
\end{corollary}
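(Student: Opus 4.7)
The plan is to transport the nonvanishing class in $H^0(X,\cO_X)$ along a reduced expression of the longest element $w_0\in W$ all the way to $H^\ell(X,K_X)$ for $\ell=\ell(w_0)$, and then invoke Remark \ref{rem:dimflags}. Two ingredients will be needed: the identification $w_0'(\cO_X)=K_X$, and a positivity check for the signs appearing in iterated applications of Lemma \ref{lem:newOSWW1}(a).

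For the first ingredient, I would observe that $K_X\cdot\Gamma_i=K_i\cdot\Gamma_i=-2$ for every $i$ (since $\pi_i$ is a $\P^1$-bundle and $\pi_{i\ast}\Gamma_i=0$), so the class $K_X/2\in N^1(X)_\Q$ is the common fixed point of the affine reflections $r_i'$. The $W'$-equivariance of $\psi$ (Lemma \ref{lem:preserveK}) together with Corollary \ref{cor:newOSWW1}(c) allows one to identify $-K_X/2$ with $\rho=\tfrac{1}{2}\sum_{\beta\in\Phi^+}\beta$ in the homogeneous model, so the classical identity $w_0(\rho)=-\rho$ yields
\[
w_0'(\cO_X)\;=\;w_0(-K_X/2)+K_X/2\;=\;K_X.
\]

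Next I would fix a reduced expression $w_0=r_{i_\ell}\cdots r_{i_1}$ and set $v_k:=r_{i_k}\cdots r_{i_1}$, $L_k:=v_k'(\cO_X)$. A short computation, transferred through $\psi$ and $\psi'$ to the homogeneous model (where $-K_i$ becomes the simple root $\alpha_i$ and $\Gamma_j$ the coroot $\alpha_j^\vee$, by Corollary \ref{cor:newOSWW1}(b)), gives
\[
L_{k-1}\cdot\Gamma_{i_k}+1\;=\;v_{k-1}(-K_X/2)\cdot\Gamma_{i_k}\;=\;\langle\rho,\,v_{k-1}^{-1}(\alpha_{i_k}^\vee)\rangle.
\]
Since $v_k=r_{i_k}v_{k-1}$ is reduced, the standard Weyl-group criterion $\ell(s_iw)=\ell(w)+1\Leftrightarrow w^{-1}(\alpha_i)\in\Phi^+$ forces $v_{k-1}^{-1}(\alpha_{i_k})$ to be a positive root, so the pairing above is a strictly positive integer. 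Hence Lemma \ref{lem:newOSWW1}(a) applies with sign $+1$ at every step, producing the chain of isomorphisms $H^0(X,\cO_X)=H^1(X,L_1)=\cdots=H^\ell(X,L_\ell)=H^\ell(X,K_X)$. Since $H^0(X,\cO_X)=\C\neq 0$, Remark \ref{rem:dimflags} gives $m=\dim X=\ell$; the remaining equality $\ell(w_0)=|\Phi^+|=\dim\overline{X}$ is the classical length--dimension formula for flag manifolds.

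The main obstacle will be the second step: transferring the positivity computation cleanly through the identifications $\psi$ and $\psi'$, and keeping conventions consistent between the intersection pairing $N^1(X)\times N_1(X)\to\Z$ and the root--coroot pairing of $\Phi$. One should also verify that each $L_k$ is an integral class---this is immediate from $r_i'(L)=L+(L\cdot\Gamma_i+1)K_i\in N^1(X)$---so that Lemma \ref{lem:newOSWW1}(a) genuinely applies to line bundles at every stage.
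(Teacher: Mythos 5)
Your proof is correct, and it takes a somewhat more explicit route than the paper does. The paper's argument is: once one has $w'_0(\cO_X)=K_X$, the single nonvanishing cohomology degree of $K_X$ is computed by chaining Lemma~\ref{lem:newOSWW1}(a) along a reduced word for $w'_0$, and since the signs $\sgn(L_j\cdot\Gamma_{i_{j+1}}+1)$ are preserved by the intersection-theoretic isomorphisms $\psi,\psi'$ of Lemma~\ref{lem:preserveK}, this degree agrees with the one computed in the homogeneous model $\overline{X}$, where it is $\dim(\overline{X})$ by Borel--Weil--Bott. You instead compute the signs directly: you show that $L_{k-1}\cdot\Gamma_{i_k}+1 = v_{k-1}(-K_X/2)\cdot\Gamma_{i_k}$, transfer this to $\langle\rho,v_{k-1}^{-1}(\alpha_{i_k}^\vee)\rangle$ via Corollary~\ref{cor:newOSWW1}(b), and invoke the length criterion for reduced words to see that this is positive at every step. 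This makes your proof more self-contained (you do not need to invoke Borel--Weil--Bott on $\overline{X}$ to pin down the degree), and it anticipates the ``all shifts are $+1$'' claim that the paper re-derives a little later inside the proof of Proposition~\ref{prop:newOSWW3}. The one small caution is the convention in the length criterion: you write $v_k=r_{i_k}v_{k-1}$ (left multiplication), for which the correct form is indeed $\ell(sv)=\ell(v)+1\Leftrightarrow v^{-1}(\alpha_s)\in\Phi^+$; your statement of the criterion conflates the left and right versions, but you apply the correct one, so the argument stands.
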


\begin{proof}
It is enough to find an element of the affine Weyl group $w'\in W'$ satisfying that $w'(\cO_X)=K_X$. In fact, if this is the case, the only index $m$ in which $K_X$ has nonzero cohomology would be determined by $w'$; since on the other hand we would also have $w'(\cO_{\overline{X}})=K_{\overline{X}}$ (by Lemma \ref{lem:preserveK}), the integer $m$ would be the same for $X$ and $\overline{X}$.

The condition required is equivalent to find an element $w\in W$ such that $w(-K_X)=K_X$.
Now, it is well known (see \cite[Section 1.8]{Hum3}) that the longest element of $W$ satisfies this property.   
\end{proof}

\begin{corollary}\label{cor:newOSWW2}
With the same notation as above, $\Chi(X,L)=\Chi(\overline{X},\psi(L))$, for all $L\in\Pic(X)$ satisfying that $\psi(L)\in\Pic(\overline{X})$.
\end{corollary}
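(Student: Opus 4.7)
The plan is to regard both sides as polynomial functions on $N^1(X)_{\R}$ and prove that these polynomials are equal; the desired identity on the set of $L\in\Pic(X)$ with $\psi(L)\in\Pic(\overline{X})$ is then immediate. By Hirzebruch--Riemann--Roch each is a polynomial of degree at most $m=\dim X=\dim\overline{X}$, the equality of dimensions being the content of Corollary \ref{cor:dimflags}.

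The first substantive step is to make $W'$-alternation precise. From Lemma \ref{lem:newOSWW1}(a), for each generator $r_i'$ and every $L$ with $L\cdot\Gamma_i+1\neq 0$ the cohomologies of $L$ and $r_i'(L)$ differ by a cohomological degree shift of exactly one, so $\Chi(X,L)=-\Chi(X,r_i'(L))$ on a Zariski open subset of $N^1(X)_{\R}$; polynomial continuation then extends this identity to all of $N^1(X)_{\R}$. Iterating, $\Chi(X,w'L)=\eps(w')\Chi(X,L)$ for the sign character $\eps$ of $W'$, and via Lemma \ref{lem:preserveK} the analogous identity holds for $\Chi(\overline{X},\psi(L))$ under the corresponding action of $W'$ on $N^1(X)$.

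Next I will invoke the standard fact that the space of $W'$-alternating polynomials on $N^1(X)_{\R}$ of degree at most $m$ is one-dimensional. Any such polynomial vanishes on each of the $|\Phi^+|=m$ reflection hyperplanes of $W'$ (if $r'\in W'$ fixes $L$ then $\Chi(X,L)=-\Chi(X,L)$, forcing vanishing), hence is divisible by the product $P$ of the $m$ affine linear forms defining these hyperplanes; the quotient, being $W'$-invariant of degree $\leq 0$, must be a scalar. Thus $\Chi(X,L)=c_X\,P(L)$ and $\Chi(\overline{X},\psi(L))=c_{\overline{X}}\,P(L)$, with the two polynomials built from the same linear forms because $\psi$ carries the $W'$-reflection arrangement on $N^1(X)$ to the $\overline{W}'$-reflection arrangement on $N^1(\overline{X})$ by Lemma \ref{lem:preserveK}.

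To determine the scalars I will evaluate at $L=\cO_X$. Remark \ref{rem:dimflags} yields $\Chi(X,\cO_X)=1$, and applying the same reasoning to the rationally connected flag manifold $\overline{X}$ gives $\Chi(\overline{X},\cO_{\overline{X}})=1$. Finally $P(0)\neq 0$: by Corollary \ref{cor:newOSWW1}(c), $-K_X/2=\rho$ sits strictly inside the dominant Weyl chamber of $W$, and since $W'$ is the conjugate of $W$ by translation by $K_X/2$ this says precisely that the origin of $N^1(X)$ is off every reflection hyperplane of $W'$. Hence $c_X=c_{\overline{X}}=1/P(0)$ and the identity follows. The main subtlety I expect will be verifying that the $\pm$ sign in Lemma \ref{lem:newOSWW1}(b) really is the sign character of $W'$, and that $\psi$ identifies the two hyperplane arrangements precisely enough (and not merely up to proportionality) for the polynomial $P$ to coincide on both sides.
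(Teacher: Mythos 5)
Your proposal is correct and follows essentially the same route as the paper: both treat $\Chi_X$ and $\Chi_{\overline X}\circ\psi$ as polynomials of degree at most $m$ on $N^1(X)_\R$, use the $W'$-alternation coming from Lemma \ref{lem:newOSWW1} to show they vanish on the $m=|\Phi^+|$ reflection hyperplanes of $W'$, factor each as a scalar times the product of the corresponding affine forms, and fix the scalar by evaluating at $\cO_X$. Your framing via the one-dimensionality of the space of $W'$-alternating polynomials of bounded degree is a mild repackaging of the paper's direct factorization, and your stated subtleties (the sign character, the matching of arrangements through $\psi$) do indeed check out via Lemma \ref{lem:preserveK} exactly as you indicate.
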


\begin{proof}
As in \cite[Def. 2.6]{OSWW} we consider the polynomial function, of degree smaller than or equal to $m$, $\Chi_X:N^1(X) \to \R$, satisfying that $\Chi_X(L)=\Chi(X,L)$ for every $L\in\Pic(X)$. On one hand, one may prove that (cf. \cite[Prop. 2.9]{OSWW}) $\Chi_X$ vanishes on the hyperplanes $w'(K_X/2+(\Gamma_i)^\perp)$ for every $i$ and every $w'\in W'$, and there is one of these hyperplanes for every positive root $\beta\in \Phi^+$. Then $\Chi_X$ vanishes in at least $m=\dim(\overline{X})=\sharp(\Phi^+)$ hyperplanes of $N^1(X)$. Let $1+F_\beta=0$ denote the equation of the hyperplane determined by $\beta\in\Phi^+$; since the degree of $\Chi_X$ is not greater than $m$, it follows that 
$$\Chi_X=a\prod_{\beta\in\Phi^+}(1+F_\beta),$$
where $a=\Chi_X(\cO_X)$, which is equal to $1$ by Remark \ref{rem:dimflags}. In particular, $\Chi_X$ is completely determined by the root system $\Phi$, and our statement follows.  
\end{proof}

\begin{proposition}\label{prop:newOSWW3}
With the same notation as above, $h^i(X,L)$ is equal to $h^i(\overline{X},\psi(L))$, for all $L\in\Pic(X)\cap\psi^{-1}(\Pic(\overline{X}))$, and every integer $i$.
\end{proposition}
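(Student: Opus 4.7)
The plan is to transfer the Borel--Weil--Bott theorem from the homogeneous model $\overline X$ to $X$, using as inputs the $W'$-equivariance of cohomology from Lemma \ref{lem:newOSWW1}(a) together with the Euler characteristic matching from Corollary \ref{cor:newOSWW2}. The argument splits into a singular case (where $L+K_X/2$ lies on a reflection wall) and a regular case.

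For the singular case, if $L \cdot \Gamma_i = -1$ for some $i$, then $L$ restricts to $\cO_{\P^1}(-1)$ on every fibre of $\pi_i$, so both $\pi_{i*}L$ and $R^1\pi_{i*}L$ vanish and the Leray spectral sequence yields $H^j(X,L)=0$ for every $j$. Iterating Lemma \ref{lem:newOSWW1}(a) propagates this vanishing to the whole $W'$-orbit of $L$. Because $\psi$ is $W'$-equivariant (Lemma \ref{lem:preserveK}), $\psi(L)$ lies in the corresponding singular stratum of the $\overline{W}'$-action on $N^1(\overline X)$, and Borel--Weil--Bott on $\overline X$ produces the matching vanishing on the homogeneous model.

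For the regular case, I would use $W'$ to bring $L$ to the unique dominant representative $L_0$ characterised by $L_0 \cdot \Gamma_i \ge 0$ for every $i$. Each reflection traversed along the way contributes a shift of $\pm 1$ in cohomological degree by Lemma \ref{lem:newOSWW1}(a), and $\psi$-equivariance ensures the same shift occurs on $\overline X$; hence the statement reduces to the dominant case $L=L_0$. Top-degree vanishing $H^m(X,L_0)=0$ is then immediate from Serre duality, since $H^m(X,L_0)\cong H^0(X,K_X-L_0)^\vee$ and $(K_X-L_0)\cdot\Gamma_i\le -2$ forces $\pi_{i*}(K_X-L_0)=0$, so $H^0(X,K_X-L_0)=0$.

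The main obstacle is then the intermediate vanishing $H^j(X,L_0)=0$ for $0<j<m$ when $L_0$ is dominant. Once this is established, Corollary \ref{cor:newOSWW2} yields
$$h^0(X,L_0)=\Chi(X,L_0)=\Chi(\overline X,\psi(L_0))=h^0(\overline X,\psi(L_0))$$
by Borel--Weil on $\overline X$, completing the proof. I would address this inductively: when $L_0\cdot\Gamma_i=0$ for some $i$, the line bundle $L_0$ is trivial on each $\pi_i$-fibre and therefore descends as $\pi_i^* M$ for a line bundle $M$ on $X_i$, reducing the cohomology computation to a Picard-number-$(n-1)$ variety; when $L_0\cdot\Gamma_i>0$ for every $i$, short exact sequences relating $L_0$ to its reduction along a $\pi_i$-fibre, combined with an inductive hypothesis on $L_0\cdot\sum_i\Gamma_i$, should close the argument. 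An alternative, more symmetric route is to exploit the sign-cancellation forced by the $W'$-shift pattern together with the explicit Euler polynomial produced in the proof of Corollary \ref{cor:newOSWW2} to conclude directly that $H^\bullet(X,L_0)$ is concentrated in a single degree that matches the one on $\overline X$.
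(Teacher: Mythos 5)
Your reduction to the dominant case and your treatment of the singular case (a wall $L\cdot\Gamma_i=-1$ forcing total vanishing by the Leray spectral sequence for $\pi_i$) are both sound and in the spirit of the paper. The genuine gap is exactly where you flag it: the intermediate vanishing $H^j(X,L_0)=0$ for $0<j<m$. Your proposed inductive routes do not clearly close it. In particular, when $L_0\cdot\Gamma_i=0$ you descend to $X_i$, but the hypotheses of Theorem~\ref{thm:newOSWW} are not known to hold for $X_i$ — it has Picard number $n-1$, but you have no supply of $\P^1$-bundle contractions of $X_i$ corresponding to the remaining rays, so there is no inductive framework to land in. The other inductive sketch (short exact sequences along $\pi_i$-fibres plus induction on $L_0\cdot\sum_i\Gamma_i$) is unsubstantiated.

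The paper's proof avoids proving intermediate vanishing directly and instead pushes cohomology out of range. Fix a reduced word $(r'_{i_1},\dots,r'_{i_m})$ for the longest element $w'_0\in W'$ and apply it step by step to $L\in N'$. By Lemma~\ref{lem:newOSWW1}(a) each step shifts the cohomological degree by $\pm 1$, the sign depending only on $L_j\cdot\Gamma_{i_{j+1}}$; since $\psi$ preserves intersection numbers, the sign pattern on $X$ agrees with that on $\overline X$, where Borel--Weil--Bott guarantees the shift is $+1$ at every step. After $m$ steps one gets $H^i(X,L)\cong H^{i+m}(X,w'_0(L))$, and since $\dim X=m$ (Corollary~\ref{cor:dimflags}) the right-hand side vanishes for $i>0$. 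The Euler characteristic equality (Corollary~\ref{cor:newOSWW2}) then gives $h^0(X,L)=h^0(\overline X,\psi(L))$, and the $W'$-equivariance extends this to all of $\Pic(X)\cap\psi^{-1}(\Pic(\overline X))$. Your final ``alternative'' paragraph gestures at this mechanism but does not identify the essential ingredient: that the full length-$m$ shift to $w'_0(L)$ exceeds $\dim X$, which is what produces the vanishing without any induction.
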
   

\begin{proof}
Let us also denote by $N\subset N^1(X)$ the dual cone of the cone generated by $\Gamma_1,\dots,\Gamma_n$. Note that it is not true a priori --as it is in the case of a flag manifold $G/B$, and of any $X$ in the setup of \cite{OSWW}-- that $N=\Nef(X)$. On the other hand, arguing as in \cite[Cor. 2.19]{OSWW}, we may state that the interior of $N$ is a fundamental chamber for the action of $W$ on $N^1(X)$, and that the interior of $N':=N+K_X/2$ is a fundamental chamber for the action of $W'$ on $N^1(X)$. 

Let us consider a reduced sequence of reflections in $W'$, $(r'_{i_1},\dots, r'_{i_m})$, associated with simple roots $(-K_{i_1},\dots,\hspace{-1pt} -K_{i_m})$, whose composition is the longest element $w'_0$ of the group $W'$. 
Given a divisor $L\in N'\cap \Pic(X)$ satisfying that $\psi(L)\in \Pic(\overline{X})$, we may construct recursively the divisors $L_1:=r_{i_1}(L)$, $L_2:=r'_{i_2}\circ r'_{i_1}(L)$, $\dots$, $L_m:=w'_0(L)$. 
We consider also the corresponding sequence $(\overline{L}_1,\dots,\overline{L}_m)$ of divisors in $\overline{X}$ constructed upon $\overline{L}:=\psi(L)$ via the same sequence reflections; 
note that $\overline{L}_j=\psi(L_j)$ for all $j$, by Lemma \ref{lem:preserveK}. 
At every step, the cohomology of $L_{j+1}$ is obtained from the cohomology of $L_j$ by shifting its degree by $+1$ or $-1$, as described in Lemma \ref{lem:newOSWW1} (a), depending only on the intersection number $L_{j}\cdot\Gamma_{i_{j+1}}$. 
In the case of $\overline{X}=G/B$ the degree of the shifting is $+1$ at every step, by the Borel--Weil--Bott theorem. 
Since $L_{j}\cdot \Gamma_{i_{j+1}}=\psi(L_j)\cdot\psi'(\Gamma_{i_{j+1}})=\overline{L}_{j}\cdot \overline{\Gamma}_{i_{j+1}}$, it follows that at every step the degree of the shifting is $+1$ for $X$, as well. 
In particular, for any positive integer $i$, we have:
$$H^i(X,L)=H^{m+i}(X,w'_0(L))=0,$$
and we may conclude that:
$$
H^0(X,L)=\Chi(X,L)=\Chi(\overline{X},\psi(L))=H^0(\overline{X},\psi(L)).
$$
Finally, using that the interior of $N'$ is a fundamental chamber for the action of $W'$ on $N^1(X)$, we may use Lemma \ref{lem:newOSWW1} (a) to conclude that the statement holds for every divisor $L\in\Pic(X)\cap\psi^{-1}(\Pic(\overline{X}))$. 
\end{proof}

\subsection{The Mori cone of $X$}
We are now ready to finish the proof of Theorem \ref{thm:newOSWW}, by reducing it to \cite[Theorem 1.2]{OSWW} via the following description of the cone of effective curves of $X$: 

\begin{proposition}\label{prop:simpl} 
With the same notation as above, $\cNE{X}$ is the simplicial cone generated by $\Gamma_1,\dots,\Gamma_n$.
In particular $X$ is a Fano manifold.
\end{proposition}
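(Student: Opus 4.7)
The plan is to dualize: it will suffice to prove $\Nef(X)\supseteq N$, where $N:=\{L\in N^1(X)_\R\,:\,L\cdot\Gamma_i\geq 0,\,\forall i\}$, since the reverse inclusion $\Nef(X)\subseteq N$ is immediate from the fact that each $\Gamma_i$ is an effective curve class. Because the intersection matrix $(-K_i\cdot\Gamma_j)_{ij}$ is the nondegenerate Cartan matrix of $\Phi$ by Corollary \ref{cor:newOSWW1}(b), there is a unique dual basis $L_1,\dots,L_n\in N^1(X)_\Q$ characterized by $L_i\cdot\Gamma_j=\delta_{ij}$ (the geometric analogues of the fundamental weights), and these span $N$ as a simplicial cone. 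The problem therefore reduces to showing that every $L_i$ is nef.

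I would then exploit the $\P^1$-bundle structures $\pi_i:X\to X_i$, each of which has relative Picard number one, so that $\pi_i^*$ identifies $N^1(X_i)_\Q$ with the hyperplane $\Gamma_i^\perp\subset N^1(X)_\Q$. Given any $j\neq i$, the class $L_j$ lies in $\Gamma_i^\perp$ and thus writes uniquely as $L_j=\pi_i^*M^{(i)}_j$ for some $M^{(i)}_j\in N^1(X_i)_\Q$; if $M^{(i)}_j$ is nef on $X_i$, then $L_j$ is nef on $X$. Consequently, it is enough to exhibit, for each index $j$, a single $i\neq j$ such that $M^{(i)}_j$ is nef on $X_i$.

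My plan is to carry this out by induction on the Picard number $n$. The base case $n=1$ forces $X\iso\P^1$, where the statement is trivial. For the inductive step, I would prove that each $X_i$ itself satisfies the hypotheses of Theorem \ref{thm:newOSWW} with Picard number $n-1$; the inductive instance of Proposition \ref{prop:simpl} applied to $X_i$ then makes $\cNE{X_i}$ simplicial, and the same dual-basis construction, now performed on $X_i$, places precisely the classes $M^{(i)}_j$ inside $\Nef(X_i)$.

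The main obstacle is this verification: for each $i$ one must produce $n-1$ independent $K_{X_i}$-negative extremal rays on $X_i$ whose elementary contractions are smooth $\P^1$-bundles. These are expected to arise from commutative squares
\[
\xymatrix@R=15pt@C=30pt{X\ar[r]^{\pi_i}\ar[d]_{\pi_j}&X_i\ar[d]^{\rho_{ij}}\\X_j\ar[r]&Y_{ij}}
\]
relating each pair $\pi_i,\pi_j$, whose shape is governed by the off-diagonal Cartan-matrix entry $-K_i\cdot\Gamma_j\in\{0,-1,-2,-3\}$; the existence of these squares and the fact that $\rho_{ij}$ is a smooth $\P^1$-bundle should be extracted from the structural theory of pairs of smooth $\P^1$-bundles developed in \cite{MOSW,OSWW} (and must be established \emph{without} appealing to \cite[Thm.~1.2]{OSWW}, on pain of circularity). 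Once $\cNE{X}=\cV$ is obtained, the Fano property is automatic: writing $-K_X=2(L_1+\cdots+L_n)$ we have $-K_X\cdot\Gamma_j=2>0$ for every generator of $\cNE{X}$, so $-K_X$ lies in the interior of $\Nef(X)=N$ and is therefore ample.
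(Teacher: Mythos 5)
Your dualization step is sound: to show $\cNE{X}=\operatorname{cone}(\Gamma_1,\dots,\Gamma_n)$ it suffices to show $N\subseteq\Nef(X)$, and the nondegeneracy of the Cartan matrix gives the dual basis $L_1,\dots,L_n$ spanning $N$. The reduction to ``each $L_j$ is the pullback of some class along $\pi_i$ for $i\neq j$'' is also correct. But the inductive step at the core of your argument is false, and the failure already appears in the smallest nontrivial example.

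Take $X=\mathrm{SL}(3)/B$, the full-flag threefold, with $n=2$: both elementary contractions are smooth $\P^1$-bundles $\pi_1,\pi_2:X\to\P^2$. You claim that each $X_i$ satisfies the hypotheses of Theorem~\ref{thm:newOSWW} with Picard number $n-1$, i.e. that $X_i=\P^2$ admits a smooth $\P^1$-bundle elementary contraction. It does not: $\P^2$ has Picard number one and its only contraction is to a point. Equivalently, the commutative square you hope for, with $\rho_{12}:X_1\to Y_{12}$ a smooth $\P^1$-bundle, cannot exist here: $Y_{12}$ would have to have Picard number zero, hence be a point, and $\P^2\to\{*\}$ is not a $\P^1$-bundle. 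This reflects a general phenomenon in rational homogeneous geometry: the intermediate quotients $G/P_i$ are almost never themselves complete flag manifolds, so they do not carry the required family of $\P^1$-bundle contractions. Consequently the induction does not close, and the claim ``$M^{(i)}_j$ is nef on $X_i$'' is left without support (one would need to control $\Nef(X_i)$ by some entirely different means, which is not easier than the original problem).

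The paper proceeds along a very different line, avoiding any descent to the $X_i$. For every proper $I\subsetneq\{1,\dots,n\}$ it constructs directly the face-contraction $\pi_I$ via the rationally connected fibration for the curves $\{\Gamma_i:i\in I\}$, and the key input is that this rational fibration is everywhere defined. That in turn is reduced, by \cite[Proposition 1]{BCD}, to the fact that all $\ch(I)$-equivalence classes have the same dimension, which is proved by showing that every such class equals the image of a Bott--Samelson variety $Z_\ell$ for $\ell$ a reduced word in $W_I$, and then invoking Proposition~\ref{prop:stein2}. That proposition, in turn, is a numerical comparison ($\Chi$ and $h^0$ of pulled-back line bundles on $Z_\ell$ and $\overline Z_\ell$) between $X$ and its homogeneous model $\overline X=G/B$, using the isomorphism $\psi$ of Proposition~\ref{prop:newOSWW3}. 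None of this passes through the geometry of $X_i$, and it is precisely this detour through the homogeneous model and Bott--Samelson varieties that is missing from your outline.
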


To this end, we will use the Bott-Samelson varieties of $X$, whose construction we recall here (see \cite[Section 3]{OSWW}):

\begin{construction}
Given a sequence $\ell=(l_1,\dots,l_r)$, $l_i\in \{1,2,\dots,n\}$, set $\ell[s]:=(l_1,\dots,l_{r-s})$ for every $s\leq r$; in particular $\ell[0]=\ell$.
Fix a point $x \in X$. Given a sequence $\ell=(l_1,\dots,l_r)$ of indices in $\{1,2,\dots,n\}$, for every $s=0,\dots,r$ we construct a manifold $Z_{\ell[s]}$, $s=0,\dots,r$, called the {\it Bott-Samelson variety} associated to $\ell[s]$, together with morphisms
$$f_{\ell[s]}:Z_{\ell[s]} \to X,\quad p_{\ell[s+1]}:Z_{\ell[s]}\to Z_{\ell[s+1]},$$
which are defined recursively as follows: for $s=r$ we set $Z_{\ell[r]}:=\{x\}$ and let $f_{\ell[r]}:\{x\}\to X$ be the inclusion. Then for $s<r$ we consider the elementary contraction $\pi_{l_{r-s}}:X\to X_{l_{r-s}}$ determined by the extremal ray generated by $\Gamma_{l_{r-s}}$, the composition
$g_{\ell[s+1]}:=\pi_{l_{r-s}}\circ f_{\ell[s+1]}:Z_{\ell[s+1]}\to X_{l_{r-s}}$, and define $Z_{\ell[s]}$ upon $Z_{\ell[s+1]}$ as the fiber product with $\pi_{l_{r-s}}$:
$$
\xymatrix@=35pt{Z_{\ell[s]}\ar[r]^{f_{\ell[s]}}\ar[d]_{p_{\ell[s+1]}}&X\ar[d]^{\pi_{l_{r-s}}} \\
Z_{\ell[s+1]}
\ar[ru]_{f_{\ell[s+1]}}\ar[r]^{g_{\ell[s+1]}}&X_{l_{r-s}}}
$$
\end{construction} 

A key result for our proof of Proposition \ref{prop:simpl} will be the following:

\begin{proposition}\label{prop:stein2}[Cf. \cite[Corollary 3.18]{OSWW}]
Let $\ell=(l_1,\dots,l_r)$ be a sequence, with $l_j\in\{1,\dots,n\}$, for all $j$. Then $\dim f_\ell(Z_\ell) =\dim Z_\ell$ if and only if $w(\ell)$ is reduced.
\end{proposition}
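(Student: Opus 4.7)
The plan is to reduce the statement to the analogous well-known fact on the homogeneous model $\overline{X}=G/B$, where the image of a Bott-Samelson resolution is a Schubert variety whose dimension equals the length of the associated Weyl group element.

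Since $Z_\ell$ is constructed as an iterated tower of $\P^1$-bundles over a point, we have $\dim Z_\ell=r$, so $\dim f_\ell(Z_\ell)=r$ is equivalent to $f_\ell$ being generically finite, hence to the nonvanishing of the top self-intersection $(f_\ell^*L)^r$ on $Z_\ell$ for an ample $L\in\Pic(X)$. The crux is to express this number intrinsically in terms of the combinatorial data preserved by $\psi$ and $\psi'$. I would proceed inductively along the tower: at each level, $p_{\ell[s+1]}$ admits the tautological section $\sigma_s:z\mapsto(f_{\ell[s+1]}(z),z)$, whose image class in $Z_{\ell[s]}$ I denote by $\xi_s$. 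Intersecting with a fiber of $p_{\ell[s+1]}$, which $f_{\ell[s]}$ identifies with a fiber of $\pi_{l_{r-s}}$ (of class $\Gamma_{l_{r-s}}$), gives the recursion
$$
f_{\ell[s]}^*L=p_{\ell[s+1]}^*f_{\ell[s+1]}^*L+(L\cdot\Gamma_{l_{r-s}})\,\xi_s,
$$
while the identification $N_{\sigma_s/Z_{\ell[s]}}\cong\sigma_s^*T_{p_{\ell[s+1]}}$, together with $K_{p_{\ell[s+1]}}=f_{\ell[s]}^*K_{l_{r-s}}$, yields $\sigma_s^*\xi_s=-f_{\ell[s+1]}^*K_{l_{r-s}}$. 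Unfolding these two relations along the tower exhibits $\int_{Z_\ell}(f_\ell^*L)^r$ as a polynomial in the numbers $L\cdot\Gamma_i$ and $K_i\cdot\Gamma_j$, with coefficients depending only on the word $\ell$.

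Since $\psi$ and $\psi'$ are constructed precisely so as to preserve these intersection numbers (sending $K_i\mapsto\overline{K}_i$ and $\Gamma_j\mapsto\overline{\Gamma}_j$), one obtains the identity $\int_{Z_\ell}(f_\ell^*L)^r=\int_{Z_{\overline{\ell}}}(f_{\overline{\ell}}^*\psi(L))^r$ whenever $\psi(L)\in\Pic(\overline{X})$. On $\overline{X}=G/B$, the classical theory of Bott-Samelson resolutions tells us that $f_{\overline{\ell}}(Z_{\overline{\ell}})$ is the Schubert variety associated to $w(\ell)$, whose dimension equals the length of $w(\ell)$, so the right-hand side is nonzero for ample $\psi(L)$ if and only if $w(\ell)$ is reduced. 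Transferring back, $\dim f_\ell(Z_\ell)=r$ if and only if $w(\ell)$ is reduced.

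The main obstacle I anticipate is the careful bookkeeping of the Chow ring of the Bott-Samelson tower: one must verify that every intermediate intersection number really depends only on the matrices $(K_i\cdot\Gamma_j)$ and $(L\cdot\Gamma_i)$ preserved by $\psi,\psi'$, and not on finer geometric information about the $\P^1$-bundles $\pi_i$. Once that combinatorial reduction is in place, invoking the classical statement on $\overline{X}$ closes the argument.
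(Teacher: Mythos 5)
Your strategy is correct and lands on the same reduction to the homogeneous model as the paper, but the technical lemma you invoke is genuinely different. The paper tests bigness of $f_\ell^*L$ (for a suitable ample $L$) through Proposition~\ref{prop:Chi}: the Euler characteristics $\Chi(Z_\ell,f_\ell^*L)$ and $\Chi(\overline{Z}_\ell,\overline{f}_\ell^*\psi(L))$ agree as polynomials, and then the vanishing Lemma~\ref{lem:j1} (plus the hypothesis $L-K_X/2$ nef) upgrades this to equality of $h^0$'s, from which bigness is read off via asymptotic growth of sections. You instead test bigness of the nef class $f_\ell^*L$ by positivity of the top self-intersection $\int_{Z_\ell}(f_\ell^*L)^r$, which you compute recursively in the Chow ring of the Bott--Samelson tower and identify as a universal polynomial in the matrices $(L\cdot\Gamma_i)$ and $(K_i\cdot\Gamma_j)$ preserved by $\psi$, $\psi'$. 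This is more elementary: it bypasses Riemann--Roch, the Kawamata--Viehweg-type vanishing, and the auxiliary hypothesis $L-K_X/2\in\Nef(X)$; the price is the Chow-ring bookkeeping that you rightly flag as the delicate point.

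One correction in that bookkeeping: your recursion
\[
f_{\ell[s]}^*L=p_{\ell[s+1]}^*f_{\ell[s+1]}^*L+(L\cdot\Gamma_{l_{r-s}})\,\xi_s
\]
is inconsistent with $\sigma_s^*\xi_s=-f_{\ell[s+1]}^*K_{l_{r-s}}$. Restricting along $\sigma_s$ (and using $f_{\ell[s]}\circ\sigma_s=f_{\ell[s+1]}$) forces the base term to carry the correction coming from $\sigma_s^*\xi_s$, so the correct numerical identity is
\[
f_{\ell[s]}^*L \equiv p_{\ell[s+1]}^*f_{\ell[s+1]}^*\bigl(L+(L\cdot\Gamma_{l_{r-s}})K_{l_{r-s}}\bigr)+(L\cdot\Gamma_{l_{r-s}})\,\xi_s.
\]
With this fix, together with $\xi_s^2=-\xi_s\cdot p_{\ell[s+1]}^*f_{\ell[s+1]}^*K_{l_{r-s}}$, the expansion of $(f_\ell^*L)^r$ down the tower indeed closes up as a polynomial in the intersection numbers preserved by $\psi$ and $\psi'$, and your argument goes through.
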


Since, with our current assumptions, we do not have the ampleness of  $-K_X$, the proof given in \cite{OSWW}
needs to be slightly modified, by using weaker versions of \cite[Lemma 3.14 and Proposition 3.17]{OSWW}:

\begin{lemma}\label{lem:j1}
Let $D$ be a nef divisor on $Z_\ell$. Then $H^i(Z_\ell,f_\ell^*(K_{X}/2)+D)=0$ for every $i >0$. 
\end{lemma}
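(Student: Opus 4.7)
The plan is to proceed by induction on $r$, the length of $\ell$. The base case $r = 0$ is immediate since $Z_\ell$ is a point. For $r \geq 1$, I would focus on the outermost $\P^1$-bundle $p := p_{\ell[1]} : Z_\ell \to Z_{\ell[1]}$ in the tower, which is obtained by base change from the $\P^1$-bundle $\pi_{l_r} : X \to X_{l_r}$; thus each fibre $F$ of $p$ maps isomorphically via $f_\ell$ onto a $\Gamma_{l_r}$-curve in $X$. Writing $L := f_\ell^*(K_X/2) + D$, I compute the fibrewise degree
$$
L \cdot F \,=\, \tfrac{1}{2} K_X \cdot \Gamma_{l_r} + D \cdot F \,=\, -1 + D \cdot F \,\ge\, -1
$$
using $K_X \cdot \Gamma_{l_r} = -2$ and the nefness of $D$. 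Hence $L|_F \cong \cO_{\P^1}(d)$ with $d \ge -1$, so $R^1 p_* L = 0$, and the Leray spectral sequence reduces the desired vanishing to $H^i(Z_{\ell[1]}, p_* L) = 0$ for $i > 0$.

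Next I would exploit the canonical section $\sigma : Z_{\ell[1]} \to Z_\ell$ of $p$ supplied by the fibre-product description $Z_\ell = Z_{\ell[1]} \times_{X_{l_r}} X$, which satisfies $f_\ell \circ \sigma = f_{\ell[1]}$. Setting $S := \sigma(Z_{\ell[1]})$, the restriction
$$
\sigma^* L \,=\, f_{\ell[1]}^*(K_X/2) + \sigma^* D
$$
has exactly the shape required by the inductive hypothesis, since $\sigma^* D$ remains nef on $Z_{\ell[1]}$ (pullbacks of nef classes are nef). Applying the induction, $H^i(Z_{\ell[1]}, \sigma^* L) = 0$ for $i > 0$. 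Pushing forward the short exact sequence $0 \to L(-S) \to L \to L|_S \to 0$ by $p$ and using $R^1 p_* L = 0$ yields
$$
0 \,\to\, p_*L(-S) \,\to\, p_* L \,\to\, \sigma^* L \,\to\, R^1 p_* L(-S) \,\to\, 0,
$$
so it remains to analyse the contribution of $p_* L(-S)$ and $R^1 p_* L(-S)$ via long exact sequences.

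The main obstacle lies precisely here: since $L(-S)$ has fibrewise degree $-2 + D \cdot F$, one cannot directly invoke the inductive hypothesis on $p_* L(-S)$, and subtracting $S$ from $D$ typically destroys nefness in $Z_\ell$. To get around this I would run an inner induction on the fibrewise degree $d = D \cdot F \ge 0$: when $d = 0$ one has $p_* L = 0$ and there is nothing to prove, and for $d \ge 1$ one constructs a filtration of $p_* L$ whose successive quotients are line bundles on $Z_{\ell[1]}$ of the form $f_{\ell[1]}^*(K_X/2) + (\textrm{nef})$, to each of which the outer induction applies. The delicate step — and the reason the statement holds for \emph{any} nef divisor $D$ on $Z_\ell$ without needing $-K_X$ to be ample on $X$ (which we do not yet know) — is verifying that the residual classes appearing as graded pieces on $Z_{\ell[1]}$ genuinely remain nef after pushing forward, so that the induction can indeed be closed.
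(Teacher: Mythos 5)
Your opening moves are sound: the fibrewise degree computation $L\cdot F = -1 + D\cdot F \ge -1$ (using $K_X\cdot\Gamma_{l_r}=-2$), the resulting vanishing $R^1p_*L=0$, and the Leray reduction to $H^i(Z_{\ell[1]},p_*L)=0$ are all correct, as is the observation that the fibre-product section $\sigma$ satisfies $f_\ell\circ\sigma = f_{\ell[1]}$, so that $\sigma^*L = f_{\ell[1]}^*(K_X/2)+\sigma^*D$ has the shape needed for the inductive hypothesis. However, the gap you flag at the end is real, and the filtration you propose does not close it. The natural filtration of $p_*L$ by the subsheaves $p_*L(-iS)$, $i=0,\dots,d$, has graded pieces
\[
\sigma^*L\otimes N_{S/Z_\ell}^{\otimes(-i)} \;=\; f_{\ell[1]}^*(K_X/2)\;+\;\bigl(\sigma^*D + i\,f_{\ell[1]}^*K_{l_r}\bigr),
\]
using $N_{S/Z_\ell}\cong T_{p}|_S \cong f_{\ell[1]}^*(-K_{l_r})$. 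For $i\ge 1$ the twist $\sigma^*D + i\,f_{\ell[1]}^*K_{l_r}$ has no reason to be nef: $-K_{l_r}$ is a simple root in the terminology of Corollary \ref{cor:newOSWW1}, not a nef class, so $f_{\ell[1]}^*(-K_{l_r})$ can have positive degree on curves of $Z_{\ell[1]}$ (e.g.\ on a $p_{\ell[2]}$-fibre whenever $\Gamma_{l_{r-1}}\cdot(-K_{l_r})>0$), which makes $i\,f_{\ell[1]}^*K_{l_r}$ increasingly negative there. You acknowledge this is ``the delicate step'', but you do not produce an alternative filtration, and no mechanism is offered for manufacturing graded pieces of the form $f_{\ell[1]}^*(K_X/2)+(\text{nef})$. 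As it stands the inner induction on $d$ is a hope, not a proof.

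For comparison, the paper's own ``proof'' is simply a pointer to \cite[Proof of Lemma 3.14]{OSWW}. That argument does not proceed through the Leray/filtration analysis of $p_*L$ at all: it works directly on $Z_\ell$, exploiting the relative canonical formula $K_{Z_\ell}=p^*K_{Z_{\ell[1]}}+f_\ell^*K_{l_r}$ (which comes from the Cartesian square defining $Z_\ell$) together with an induction on $r$ to establish a positivity property of $f_\ell^*(-K_X/2)-K_{Z_\ell}$, and then concludes by Kawamata--Viehweg vanishing applied to $L=K_{Z_\ell}+\bigl(f_\ell^*(-K_X/2)-K_{Z_\ell}\bigr)+D$. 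This route bypasses the rank-$d$ bundle $p_*L$ entirely, which is precisely what avoids the obstruction your approach runs into, and — as the paragraph preceding the lemma stresses — it is formulated so as not to require any ampleness of $-K_X$, which is unavailable in the present setting.
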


\begin{proof} See \cite[Proof of Lemma 3.14]{OSWW}.
\end{proof}

\begin{proposition}\label{prop:Chi}
Let $L\in\Pic(X)$. Then
$\Chi(Z_\ell, f^*_\ell L) =\Chi(\overline Z_\ell,   \overline f^*_\ell{\psi ( L)})$ and, if $L- K_{X}/2$ is nef, then
$h^0(Z_\ell,  f^*_\ell L) =h^0(\overline Z_\ell, \overline f^*_\ell{\psi ( L)})$. In particular, if $L$ is nef as well, then $f^*_\ell L$ is big if and only if  $\overline f^*_\ell{\psi ( L)}$ is big.
\end{proposition}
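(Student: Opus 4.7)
The proof is by induction on the length $r$ of $\ell$; the case $r=0$ is immediate, since $Z_\ell=\{x\}$ and both sides equal $1$. For the inductive step, consider the $\P^1$-bundle $p_{\ell[1]}:Z_\ell\to Z_{\ell[1]}$, obtained as the pullback of $\pi_{l_r}$ along $g_{\ell[1]}$. The universal property of the fiber product, applied to $f_{\ell[1]}:Z_{\ell[1]}\to X$ and $\id_{Z_{\ell[1]}}$, provides a canonical section $\sigma:Z_{\ell[1]}\to Z_\ell$ of $p_{\ell[1]}$ with image $\Sigma$ satisfying $f_\ell\circ\sigma=f_{\ell[1]}$ and the normal-bundle identity $\sigma^*\cO_{Z_\ell}(\Sigma)=f_{\ell[1]}^*\cO_X(-K_{l_r})$ (since the relative tangent bundle of $p_{\ell[1]}$ is $f_\ell^*\cO_X(-K_{l_r})$).

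Setting $d:=L\cdot\Gamma_{l_r}$, reading off the degree along the fibers of $p_{\ell[1]}$ together with the restriction to $\Sigma$ yields the decomposition
$$f_\ell^*L=\cO_{Z_\ell}(d\Sigma)\otimes p_{\ell[1]}^*f_{\ell[1]}^*(L+dK_{l_r}).$$
Iterating the short exact sequence $0\to\cO_{Z_\ell}((d-1)\Sigma)\to\cO_{Z_\ell}(d\Sigma)\to\sigma_*f_{\ell[1]}^*\cO_X(-dK_{l_r})\to 0$ computes the class of $Rp_{\ell[1]*}\cO_{Z_\ell}(d\Sigma)$ in $K(Z_{\ell[1]})$ as a $\Z$-linear combination of pullbacks $f_{\ell[1]}^*\cO_X(-jK_{l_r})$; the projection formula then gives the explicit identity
$$\Chi(Z_\ell,f_\ell^*L)=\sum_k n_k(d)\,\Chi\bigl(Z_{\ell[1]},f_{\ell[1]}^*(L+kK_{l_r})\bigr),$$
where the integers $n_k(d)$ and the range of $k$ depend only on $d$ (with $n_k=1$ and $k=0,\dots,d$ for $d\geq 0$; $n_k=-1$ and $k=d+1,\dots,-1$ for $d\leq -2$; vanishing for $d=-1$). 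The analogous computation on $\overline X$ yields the identical formula for $\Chi(\overline Z_\ell,\overline f_\ell^*\psi(L))$ with the same coefficients, because $\psi(K_{l_r})=\overline K_{l_r}$ and $\psi(L)\cdot\overline\Gamma_{l_r}=d$. The inductive hypothesis, applied term by term to $Z_{\ell[1]}$, then yields the desired $\Chi$-equality.

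For the second claim, assume $L-K_X/2$ is nef on $X$. Then $f_\ell^*(L-K_X/2)$ is nef on $Z_\ell$, so Lemma~\ref{lem:j1} gives $H^i(Z_\ell,f_\ell^*L)=0$ for every $i>0$, whence $h^0=\Chi$ on $Z_\ell$. On the homogeneous side, $\psi(L-K_X/2)=\psi(L)-K_{\overline X}/2$ has non-negative intersection with each $\overline\Gamma_i$ (since $(L-K_X/2)\cdot\Gamma_i\geq 0$), and on the flag manifold $\overline X$ nefness is equivalent to non-negative intersection with the generators of $\cNE{\overline X}$; hence the analogous vanishing applies on $\overline Z_\ell$, and the $h^0$-equality follows from the $\Chi$-equality. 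For the bigness assertion, assume in addition that $L$ is nef; then $f_\ell^*L$ is nef on $Z_\ell$, and bigness of a nef divisor is equivalent to positivity of its top self-intersection. That top self-intersection is $(\dim Z_\ell)!$ times the leading coefficient of the polynomial $m\mapsto\Chi(Z_\ell,f_\ell^*(mL))$, and by the $\Chi$-equality applied to every multiple of $L$, this leading coefficient matches that of the analogous polynomial on $\overline Z_\ell$. Hence $(f_\ell^*L)^{\dim Z_\ell}=(\overline f_\ell^*\psi(L))^{\dim Z_\ell}$, and the bigness equivalence follows.

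The main technical obstacle is the $\P^1$-bundle pushforward identity reducing $\Chi(Z_\ell,f_\ell^*L)$ to Euler characteristics on $Z_{\ell[1]}$ of pullbacks of line bundles $L+kK_{l_r}$ with coefficients depending only on $d=L\cdot\Gamma_{l_r}$. The crux is the normal-bundle identity $\sigma^*\cO_{Z_\ell}(\Sigma)=f_{\ell[1]}^*\cO_X(-K_{l_r})$, which forces every piece of the decomposition to be a pullback from $X$ of a class involving only $K_{l_r}$, i.e., data manifestly preserved by $\psi$; once this is in place the matching with $\overline X$ is mechanical.
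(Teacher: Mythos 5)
Your proof is correct, and the first two assertions are proved exactly as the paper does --- the paper just cites \cite[Proposition~3.17]{OSWW} for the $\Chi$-equality, whereas you spell out the underlying induction through the iterated $\P^1$-bundle structure of the Bott--Samelson variety, including the key normal-bundle identity $\sigma^*\cO_{Z_\ell}(\Sigma)\cong f_{\ell[1]}^*\cO_X(-K_{l_r})$ and the pushforward computation of $Rp_{\ell[1]*}\cO_{Z_\ell}(d\Sigma)$ in K-theory; this is precisely the mechanism that makes the $\Chi$-comparison depend only on the intersection matrix data preserved by $\psi$. Where you genuinely diverge from the paper is the final bigness claim. The paper writes $rL-K_X/2=(r-1)L+(L-K_X/2)$, which is nef for $r\geq 1$, and so obtains $h^0(Z_\ell,f_\ell^*(rL))=h^0(\overline{Z}_\ell,\overline{f}_\ell^*\psi(rL))$ for all $r\geq 1$; combined with the growth characterization of bigness \cite[Lemma~2.2.3]{L2}, this gives the equivalence. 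You instead pass to asymptotic Riemann--Roch: the $\Chi$-equality for all multiples of $L$ matches the leading coefficients of the two Hilbert polynomials, hence $(f_\ell^*L)^{\dim Z_\ell}=(\overline{f}_\ell^*\psi(L))^{\dim Z_\ell}$, and you conclude via the criterion that a nef divisor is big if and only if its top self-intersection is positive. Both routes are standard and correct; yours has the small advantage of invoking only the $\Chi$-equality (and not the $h^0$-equality with the auxiliary nefness of $rL-K_X/2$), while the paper's avoids the need to identify the leading coefficient with the top self-intersection. Either way the essential input is the same: two nef divisors with identical Hilbert polynomials are either both big or both not.
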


\begin{proof} The first part follows verbatim from the proof of \cite[Proposition 3.17]{OSWW}.
For the second part we notice that $\psi(\Nef(X))\subset \Nef(\overline{X})$, since this last cone is the dual of the cone generated by the $\overline{\Gamma}_i$'s, and that $\psi(K_X)=K_{\overline{X}}$. Therefore, if $L- K_{X}/2$ is nef, then the higher cohomologies of $f^*_{\ell} L$ and $\overline f^*_{\ell} \psi (L)$ vanish by Lemma \ref{lem:j1}. The last statement follows from \cite[Lemma~2.2.3]{L2}, by applying the argument above to $rL-K_X/2=(r-1)L+(L-K_X/2)$, for $r\gg 0$.
\end{proof}

\begin{proof}[Proof of Proposition \ref{prop:stein2}]
Let us consider the homogeneous model $\overline{X}$ of $X$ and the corresponding Bott-Samelson variety $\overline{Z}_\ell$, with evaluation $\overline{f}_\ell:\overline{Z}_\ell\to \overline{X}$. It is known that the property holds for $\o{proof}verline{f}_\ell$, since $ \overline f_\ell(\overline Z_\ell)$ is the Schubert variety $\overline{Bw(\ell)B}/B$ of $\overline{X}=G/B$.

Take an ample line bundle $L \in \Pic(X)$ satisfying that $L-K_X/2$ is nef. Since $\psi(\Nef(X))\subset \Nef(\overline{X})$ (see the proof of Proposition \ref{prop:Chi}), then $\psi(L)$ is ample on $\overline{X}$. Hence  $\overline{f}_\ell^*(\psi(L))$ is big if and only if $w(\ell)$ is reduced, by our first observation above. By Proposition \ref{prop:Chi}, we then have that $f^*_\ell L$ is big  if and only if $w(\ell)$ is reduced. Clearly $f^*_\ell L$ is big if and only if  $\dim f_\ell(Z_\ell) =\dim Z_\ell$, and the statement is proved.
\end{proof}

\begin{proof}[Proof of Proposition \ref{prop:simpl}]
In order to prove that $\cNE{X}$ is a simplicial cone generated by $\Gamma_1,\dots,\Gamma_n$, it is enough to show that for every proper subset $I\subset\{1, \dots, n\}$ there exists a contraction $\pi_I:X\to X_I$ satisfying that ${\pi_{I}}_*(\Gamma_i)=0$ if and only if $i\in I$.

Fix a proper subset $I \subset \{1, \dots, n\}$.  Following \cite[IV. Theorem 4.16]{kollar}, there exists a proper fibration $\pi_I:X^0 \to X_I^0$, defined on an open subset $X^0 \subset X$, whose fibers are $\ch(I)$-equivalence classes: by definition, two points in $X$ are in the same $\ch(I)$-equivalence class if and only if there exists a connected chain of rational curves, whose irreducible components are curves in the classes $\Gamma_i$, $i\in I$,
connecting them. 
In order to conclude the proof, it suffices to show that, $\pi_I$ is defined on $X$, that is $X^0=X$. But, by \cite[Proposition 1]{BCD}, the indeterminacy locus $X \setminus X^0$ of $\pi_I$ is contained in the union of $\ch(I)$-equivalence classes of dimension greater than the dimension of the general one, hence it is enough to prove that all the $\ch(I)$-equivalence classes have the same dimension.

Let $\ell=(l_1, \dots, l_r)$ be a reduced list such that $l_i \in I$, for all $i$, and $w(\ell)$ is the longest word in the subgroup $W_I\subset W$ generated by the reflections $r_i$, $i\in I$. Let $x \in X$ be any point, and $Z_\ell$ be the Bott-Samelson variety associated to $\ell$ such that $Z_{\ell[r]}=\{x\}$.

Clearly $f_\ell(Z_\ell)$ is contained in the $\ch(I)$-equivalence class containing $x$; let us show that
the opposite inclusion holds.
Assume, by contradiction, that this is not the case. Then there exists $l_{r+1}\in I$ such that, being $\ell'=(l_1,\dots,l_r,l_{r+1})$, we have $f_\ell(Z_\ell)\subsetneq f_{\ell'}(Z_{\ell'})$. Since these two varieties are irreducible, $\dim Z_\ell =\dim (f_\ell(Z_\ell))< \dim( f_{{\ell}'}(Z_{\ell'}))$, hence $\dim( f_{{\ell}'}(Z_{{\ell}'}))= \dim Z_{{\ell}'}$. This, by  Proposition  \ref{prop:stein2}, contradicts the fact that $\ell'$ is not reduced.

Then $f_\ell(Z_\ell)$ is  the $\ch(I)$-equivalence class containing $x$. In particular, since $\ell$ is reduced, we have $\dim f_\ell(Z_\ell) = \dim Z_\ell = r$, by Proposition  \ref{prop:stein2} again, and  all $\ch(I)$-equivalence class have the same dimension.
\end{proof}

\bibliographystyle{plain}
\bibliography{biblio}

\end{document}